\begin{document}

\vspace{0.5in}

\renewcommand{\bf}{\bfseries}
\renewcommand{\sc}{\scshape}
\vspace{0.5in}

\title{On Convex Hulls and the Quasiconvex Subgroups of $F_m\times\mathbb{Z}^n$}

\author{Jordan Sahattchieve}
\address{Department of Mathematics;University of Michigan;
Ann Arbor, Michigan 48109}
\email{jantonov@umich.edu}
\subjclass[2010]{20F65, 20F67}

\keywords{CAT(0), quasiconvex, convex hull}

\newtheorem{thm}{Theorem}[section]
\newtheorem{lem}[thm]{Lemma}
\newtheorem{pro}[thm]{Proposition}
\newtheorem{cor}[thm]{Corollary}
\newtheorem{ex}[thm]{Example}
\theoremstyle{defn}
\newtheorem{defn}[thm]{Definition}

\newtheorem{remark}[thm]{Remark}
\numberwithin{equation}{section}

\begin{abstract}
In this paper, we explore a method for forming the convex hull of a subset in a uniquely geodesic metric space due to Brunn and use it to show that with respect to the usual action of\\ $F_m\times\mathbb{Z}^n$ on $Tree\times\mathbb{R}^n$, every quasiconvex subgroup of $F_m\times\mathbb{Z}^n$ is convex.  Further, we show that the Cartan-Hadamard theorem can be used to show that locally convex subsets of complete and connected CAT(0) spaces are convex (this part was taken from the author's Ph.D. dissertation submitted in the Spring of 2012, see \cite{Sah2} and \cite{JSD}).  Finally, we show that the quasiconvex subgroups of $F_m\times\mathbb{Z}^n$ are precisely those of the form $A\times B$, where $A\leq F_m$ is finitely generated, and $B\leq\mathbb{Z}^n$.
\end{abstract}

\maketitle

\section{\bf Introduction and Basic Notions}\label{QCIntro}
The motivation for the work in this paper comes from the following remark found in the introductory section of \cite{GH}: "...it is currently unknown whether a quasiconvex subgroup of a CAT(0) group is itself CAT(0)."  At the time of first being acquainted with the problem, it seemed to me rather interesting that the answer to such a fundamental question in the theory of CAT(0) groups was not yet known.  By comparison, the corresponding statement in the theory of hyperbolic groups has long been known to be true.  We begin by recalling some basic definitions:

Let $(X,d)$ be a metric space, $x,y\in X$, and let $\lambda$ be a map from the closed interval $\left[0,l\right]$ to $X$, such that $c(0)=x$, $c(l)=y$, and such that $d(\lambda(t_1),\lambda(t_2))=\left|t_1-t_2\right|$ for all $t_1,t_2\in \left[0,l\right]$.  Then, we say that $\lambda$ is a \textit{geodesic segment} with endpoints $x$ and $y$ and whenever there is no ambiguity, we shall denote this geodesic segment by $\left[x,y\right]$.  To set up notation once and for all, if $\lambda:\left[0,l\right]\rightarrow X$ is the unique geodesic such that $\lambda(0)=x$ and $\lambda(l)=y$, we shall denote $\lambda(t)$ by $\left[x,y\right](t)$; also given any $Y\subseteq X$, $\mathcal{N}_{\nu}(Y)$ will stand for a $\nu$-neighborhood of $Y$, namely the set $\left\{x\in X:d(x,y)<\nu, \:for\: some\: y\in Y\right\}$.  If every two points in $X$ can be connected by a geodesic, we call $X$ a \textit{geodesic metric space}.
\begin{defn}Let $(X,d)$ be a geodesic metric space, and let $a,b,c\in X$.  Let $\Delta(a,b,c)$ denote the geodesic triangle $\left[a,b\right]\cup\left[b,c\right]\cup\left[a,c\right]$ in $X$.  We shall say that three points $\overline{a},\overline{b}$ and $\overline{c}$ in the Euclidean plane determine a comparison triangle $\overline{\Delta}(\overline{a},\overline{b},\overline{c})$ for $\Delta(a,b,c)$, if $d_X(a,b)=d_{\mathbb{E}^2}(\overline{a},\overline{b})$, $d_X(b,c)=d_{\mathbb{E}^2}(\overline{b},\overline{c})$, and $d_X(a,c)=d_{\mathbb{E}^2}(\overline{a},\overline{c})$.  The geodesic triangle $\Delta(a,b,c)$ is said to satisfy the CAT(0) comparison inequality if for any $x,y\in\Delta(a,b,c)$, $d_X(x,y)\leq d_{\mathbb{E}^2}(\overline{x},\overline{y})$, where $\overline{x}$ and $\overline{y}$ are the corresponding comparison points for $x$ and $y$ respectively.
\end{defn}

Now, we are now ready to state the definition of a CAT(0) space:

\begin{defn}\label{CAT}Let $(X,d)$ be a geodesic metric space. If for any three points $a,b,c\in X$, the geodesic triangle $\Delta(a,b,c)$ satisfies the CAT(0) comparison inequality, we shall say that $(X,d)$ is a CAT(0) space.
\end{defn}

The letters $C$, $A$, and $T$ stand for the initials of the last names of the mathematicians Cartan, Alexandrov, and Toponogov, whereas the number $0$ refers to an upper bound for the "curvature" of $X$.  We cannot use the classical geometric notions of curvature in an arbitrary geodesic metric space due to the lack of a differentiable structure.  However, a well-known result in differential geometry, which states that complete, simply connected Riemannian manifolds whose sectional curvature is bounded above by $\kappa<0$ are CAT(0) spaces, allows us to capture the salient features of non-positive curvature and motivates Definition \ref{CAT} above.

Now, given a uniquely geodesic space $X$, we would like to single out an important for us class of subspaces of $X$, namely the convex ones:
\begin{defn}
Let $(X,d)$ be a uniquely geodesic metric space.  A subset $A$ of $X$ is called convex if $\left[a,b\right]\subseteq A$ whenever $a,b\in A$.  
\end{defn}  

Given a subset $Y\subseteq X$, the smallest convex subset which contains it is called the \textit{convex hull} of $Y$, which we will denote by $conv(Y)$.
Next, we would like to quasify the idea of convexity:

\begin{defn}\label{QCSpace}Let $X$ be a uniquely geodesic metric space and let $Y\subseteq X$ be a subspace.  We shall say that $Y$ is $\nu$-quasiconvex if there exists $\nu>0$ such that $\left[x,y\right]\subseteq\mathcal{N}_{\nu}(Y)$, for all $x,y\in Y$.
\end{defn}

The notion of quasiconvexity allows for some wiggle room: a geodesic joining points in $Y$ need not be contained in $Y$ itself but is rather allowed to travel in a fixed bounded neighborhood of $Y$ instead.

Now, we introduce groups into the picture: let $X$ be a set and $G$ a group.  A \textit{group action} is a map $\sigma:G\times X\rightarrow X$, such that $\sigma (e,x)=x$, and $\sigma(g,\sigma(h,x))=\sigma(gh,x)$ for every $x\in X$.  For convenience, we shall just write $g\cdot x$ instead of $\sigma(g,x)$.  If $X$ has the structure of a metric space, we shall only be interested in group actions via isometries.  We say that $G$ acts on $X$ by isometries if $d(g\cdot x,g\cdot y)=d(x,y)$ for all $g\in G$, and all $x,y\in X$.  The action is called \textit{proper} if for each $x\in X$, there exists $r>0$ such that the set $\left\{g\in G:g\cdot B(x,r)\cap B(x,r)\neq\emptyset\right\}$ is finite, and the action is called \textit{cocompact} if there is a compact $K\subseteq X$, such that $X=\bigcup_{g\in G}g\cdot K$.  If a group $G$ acts properly and cocompactly on a metric space $X$, we say that $G$ acts geometrically on $X$.  If $X$ is also a CAT(0) space and $G$ acts geometrically, $G$ is called a \textit{CAT(0) group}.

\begin{defn}\label{QCSubgrpCAT0}Let $G$ be a CAT(0) group acting geometrically on the CAT(0) space $X$, let $H\subseteq G$ be a subgroup, and let $x_0\in X$ be a basepoint.  The subgroup $H$ is called $\nu$-quasiconvex if the group orbit $Hx_0$ is a $\nu$-quasiconvex subspace of $X$.
\end{defn}
Often, we make no mention of the quasiconvexity constant $\nu$ and simply say that $H$ is a \textit{quasiconvex} subgroup.  While the value of the constant $\nu$ in Definition \ref{QCSubgrpCAT0} may depend on the choice of the basepoint $x_0$, whether $H$ is quasiconvex or not does not depend on this choice.

It is important to note here that the notion of quasiconvexity in CAT(0) groups, unlike its counterpart in hyperbolic groups, depends on the choice of action.  As Example 2.10 in \cite{GH} shows, considering two actions of $G=F_2\times\mathbb{Z}$ on the same CAT(0) space, we can arrange a subgroup of $G$ to be quasiconvex with respect to one action but not the other. 

In order to show that a quasiconvex subgroup $H$ of the CAT(0) group $G$ acting geometrically on the CAT(0) space $X$ is a CAT(0) group, we need to exhibit a CAT(0) space $Y$ and a geometric action of $H$ on $Y$.  The reader may at this point rightfully ask the question, why can we not take the convex hull of the $H$-orbit of some point $x_0\in X$?  The convex hull is a convex, $H$-invariant subspace of a CAT(0) space, and the action of $H$ on it is proper, as the action of $G$ on $X$ is proper.  The problem is this: while the action of $G$ on $X$ was assumed to be cocompact, it is not at all obvious, and perhaps not even true in general, that $H$ being quasiconvex in $G$ should imply cocompactness of the action of $H$ on $conv(Hx_0)$.  Of course, even if one were to find a counterexample, that is an example where the induced action of $H$ on $conv(Hx_0)$ is not cocompact, this would not necessarily mean that $H$ is not a CAT(0) group because there is still the possibility that $H$ may act geometrically on some other CAT(0) space.  As far as we know, the general question whether quasiconvexity implies CAT(0) is still wide open.  In this paper, we show that what \textit{should} be true is indeed true in one special case, namely, we prove the following:\\\\
\textbf{Theorem}:  Let $H$ be a quasiconvex subgroup of $G=F_m\times\mathbb{Z}^n$, and let $X$ be the product of the regular $2m$-valent tree with $\mathbb{R}^n$ with the usual action of $G$.  Then the action of $H$ on the convex hull of any orbit $Hx_0$ is cocompact.\\

If $G$ is a CAT(0) group acting geometrically on the CAT(0) space $X$, and there exists a closed convex $H$-invariant subset of $X$ on which $H$ acts cocompactly, $H$ is called \textit{convex}.

With this terminology our theorem becomes: \textit{Any quasiconvex subgroup of $G=F_m\times\mathbb{Z}^n$ is convex with respect to the usual action of $G$ on Tree$\times\mathbb{R}^n$}.

In the course of proving this result, we introduce a technique for analyzing the convex hull in certain special class of CAT(0) spaces and derive the following complete description of the quasiconvex subgroup of $F_m\times\mathbb{Z}^n$:\\\\
\textbf{Theorem}:  If $H$ is a quasiconvex subgroup of $F_m\times\mathbb{Z}^n$, then $H$ is virtually of the form $A\times B$, where $A\leq F_m$ is finitely generated and $B\leq\mathbb{Z}^n$.\\\\

\section{\bf Convex Hulls and Quasiconvex Subgroups}\label{Hullssbgrps}
As we mentioned in the previous section, every convex subspace of a CAT(0) space is obviously itself a CAT(0) space with the induced metric. An idea which dates back to Minkowski and Brunn, and which we independently rediscovered, is to construct $conv(Y)$ by means of a sequential process as follows: For $S\subseteq X$, we define $conv^1(S)$ to be the union of all geodesic segments having both endpoints in $S$, or symbolically $conv^1(S)=\bigcup_{s_1,s_2\in S}\left[s_1,s_2\right]$.  Now, we set $conv^0(Y)=Y$ and define recursively $conv^i(Y)= conv^1(conv^{i-1}(Y))$.  This process of "convexification" results in an ascending sequence of subsets of $X$: $Y=conv^0(Y)\subseteq conv^1(Y)\subseteq\ldots\subseteq conv^i(Y)\subseteq\ldots\subseteq conv(Y)\subseteq X$, each of which gets closer to the convex hull of $Y$ in the following sense:
\begin{lem}\label{SeqConv}Let X be a uniquely geodesic space and Y a subspace, then $conv(Y)=\bigcup_{i=0}^{i=\infty}conv^i(Y)$.
\end{lem}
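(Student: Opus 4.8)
The plan is to prove the two inclusions separately, treating $conv(Y)$ as the smallest convex subset of $X$ containing $Y$ — equivalently, the intersection of all convex sets containing $Y$, which is itself convex because an intersection of convex subsets of a uniquely geodesic space is again convex. Throughout I write $U=\bigcup_{i=0}^{\infty}conv^i(Y)$ for the candidate set on the right-hand side.

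For the inclusion $U\subseteq conv(Y)$, I would argue by induction that $conv^i(Y)\subseteq conv(Y)$ for every $i$. The base case $conv^0(Y)=Y\subseteq conv(Y)$ is immediate from the definition of the convex hull. For the inductive step, suppose $conv^{i-1}(Y)\subseteq conv(Y)$. Then every geodesic segment occurring in the definition of $conv^i(Y)=conv^1(conv^{i-1}(Y))$ has both endpoints in $conv(Y)$, and since $conv(Y)$ is convex each such segment is contained in $conv(Y)$; hence $conv^i(Y)\subseteq conv(Y)$. Taking the union over all $i$ yields $U\subseteq conv(Y)$.

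For the reverse inclusion, I would show that $U$ is itself a convex set containing $Y$, so that minimality of $conv(Y)$ forces $conv(Y)\subseteq U$. That $Y\subseteq U$ is clear since $Y=conv^0(Y)$. To check convexity, take arbitrary $x,y\in U$, say $x\in conv^i(Y)$ and $y\in conv^j(Y)$ with $i\le j$. Because the sequence $conv^0(Y)\subseteq conv^1(Y)\subseteq\cdots$ is ascending, both $x$ and $y$ lie in $conv^j(Y)$, so the (unique) geodesic $\left[x,y\right]$ is one of the segments whose union defines $conv^{j+1}(Y)$; hence $\left[x,y\right]\subseteq conv^{j+1}(Y)\subseteq U$. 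Thus $U$ is convex, completing the argument.

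The argument is elementary, and the only point that genuinely requires care is the convexity of $U$: a priori the two chosen points may be introduced at very different stages of the construction, and one must exploit the nested structure of the sequence to push them into a common layer $conv^j(Y)$ before a single further application of $conv^1$ captures the geodesic joining them. Unique geodesicity of $X$ is what makes $conv^1$, and therefore each $conv^i$, unambiguously defined, so no difficulty arises on that score. I note finally that $conv(Y)$ must here denote the smallest convex set rather than the \emph{closed} convex hull; were closedness demanded, the same reasoning would only give $conv(Y)=\overline{U}$, so it is essential that the ambient notion of convex hull be the non-closed one.
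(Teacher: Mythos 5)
Your proof is correct and takes essentially the same approach as the paper: the paper's two-line proof simply asserts that $conv^i(Y)\subseteq conv(Y)$ for all $i$ and that $\bigcup_{i=0}^{\infty}conv^i(Y)$ is convex, which are precisely the two facts you verify in detail (by induction on $i$, and by pushing two points into a common layer $conv^j(Y)$, respectively). Your write-up is just the fully detailed version of that argument, and your closing remark about using the non-closed convex hull is a correct reading of what the lemma requires.
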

\begin{proof}Obviously, $conv^i(Y) \subseteq conv(Y)$ for all $i$. It is also clear that \\ $\bigcup_{i=0}^{i=\infty}conv^i(Y)$ is convex, hence it equals $conv(Y)$.
\end{proof}
Lemma \ref{SeqConv} was the starting point for our investigation of convex hulls.  Its proof is not difficult and we realized that the result was already attributed to Hermann Brunn in the case when $X$ is a vector space only after we had proved the results on convexity in polygonal complexes below.\\\\
\textbf{Remark:} A set $Y$ is $\nu$-quasiconvex if and only if $conv^1(Y)\subseteq\mathcal{N}_{\nu}(Y)$.\\\\
We now relate the foregoing discussion on convexity with non-positive curvature.  In CAT(0) spaces, as a consequence of the convexity of the metric, one has control over the growth of the sizes of the sets $conv^i(Y)$ as the following result shows:
\begin{lem}\label{GrowthCont}Let $X$ be a CAT(0) space and let $Y\subseteq X$ be a $\nu$-quasiconvex subset of X.  Then, $conv^i(Y)\subseteq N_{i\nu}(Y)$.
\begin{proof}
In this proof we assume that all geodesics are parametrized proportional to arc length and we proceed by induction on $i$.  The starting step $i=1$ is handled by the remark above.  For the inductive step, suppose that $conv^{i-1}(Y)\subseteq N_{(i-1)\nu}(Y)$ and let $x\in conv^i(Y)$.  Then, $x\in\left[x_1,x_2\right]$, where $x_1,x_2\in conv^{i-1}(Y)\subseteq N_{(i-1)\nu}(Y)$.  Let $x_1',x_2'\in Y$ be such that $d(x_j,x_j')<(i-1)\nu$, for $j=1,2$.  By convexity of the CAT(0) metric, $d(\left[x_1,x_2\right](t),\left[x_1',x_2'\right](t))\leq (1-t)d(x_1,x_1')+td(x_2,x_2')<(i-1)\nu$.  This shows that $d(x,conv^1(Y))<(i-1)\nu$ and thus we conclude that $x\in N_{i\nu}(Y)$, as desired.
\end{proof}
\end{lem}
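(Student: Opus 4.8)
The plan is to argue by induction on $i$, with the extra $\nu$ gained at each stage coming from a single application of the convexity of the $CAT(0)$ metric. The base case $i=1$ is immediate: by the Remark preceding the lemma, $Y$ being $\nu$-quasiconvex is equivalent to $conv^1(Y)\subseteq N_{\nu}(Y)$, which is precisely the assertion for $i=1$. Throughout I parametrize every geodesic proportionally to arc length on $[0,1]$, so that two geodesics can be compared at a common parameter value.

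For the inductive step I would assume $conv^{i-1}(Y)\subseteq N_{(i-1)\nu}(Y)$ and fix an arbitrary point $x\in conv^i(Y)=conv^1(conv^{i-1}(Y))$. By the definition of $conv^1$, the point $x$ lies on a geodesic $[x_1,x_2]$ with $x_1,x_2\in conv^{i-1}(Y)$, say $x=[x_1,x_2](t_0)$ for some $t_0\in[0,1]$. The inductive hypothesis then supplies points $x_1',x_2'\in Y$ with $d(x_j,x_j')<(i-1)\nu$ for $j=1,2$. The heart of the argument is to compare the two geodesics $[x_1,x_2]$ and $[x_1',x_2']$ via the convexity of the metric: the function $t\mapsto d([x_1,x_2](t),[x_1',x_2'](t))$ is convex on $[0,1]$, hence bounded above by the chord joining its endpoint values, giving $d([x_1,x_2](t),[x_1',x_2'](t))\le (1-t)\,d(x_1,x_1')+t\,d(x_2,x_2')<(i-1)\nu$ for every $t\in[0,1]$.

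Applying this at $t=t_0$ and noting that $[x_1',x_2'](t_0)$ lies in $conv^1(Y)$ (since $x_1',x_2'\in Y$), I conclude $d(x,conv^1(Y))<(i-1)\nu$. Finally, quasiconvexity of $Y$ from the base case gives $conv^1(Y)\subseteq N_{\nu}(Y)$, so one triangle inequality yields $d(x,Y)<(i-1)\nu+\nu=i\nu$, that is, $x\in N_{i\nu}(Y)$. Since $x$ was arbitrary, this completes the induction.

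The argument is short, and the only substantive input is the convexity of the $CAT(0)$ metric, so I do not anticipate a serious obstacle. The one point to watch is the bookkeeping of the two neighbourhood constants: one must make sure the geodesic comparison is carried out at the \emph{same} parameter $t_0$ on both geodesics, and that the $(i-1)\nu$ from the comparison combines with the additional $\nu$ from quasiconvexity to give exactly $i\nu$, rather than the constants accumulating in some other way across the induction.
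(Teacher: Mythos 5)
Your proof is correct and is essentially identical to the paper's: the same induction, the same comparison of $[x_1,x_2]$ and $[x_1',x_2']$ via convexity of the CAT(0) metric, and the same final combination of $(i-1)\nu$ with the quasiconvexity constant $\nu$. In fact you spell out the concluding triangle-inequality step (passing from $d(x,conv^1(Y))<(i-1)\nu$ to $x\in N_{i\nu}(Y)$) more explicitly than the paper does, which is a minor improvement in exposition rather than a different argument.
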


The number $k(Y)=inf\left\{i:conv^i(Y)=conv(Y)\right\}$ is called the Brunn number of $Y$.  Brunn gave a lower and an upper bound for $k(Y)$ in finite-dimensional vector spaces, see \cite{Brn}.  In view of Lemma \ref{GrowthCont}, a uniform bound for the Brunn number over all subsets of a CAT(0) space has important implications from the point of view of cocompactness of group actions:

\begin{lem}\label{QuasCCoco}Suppose that $G$ is a CAT(0) group, i.e. $G$ acts geometrically on the CAT(0) space $X$, and suppose that $H\leq G$ is a $\nu$-quasiconvex subgroup which has the property that $conv(Hx_0)=conv^k(Hx_0)$ for some $x_0\in X$, and $k\in\mathbb{N}$.  Then, $H$ acts cocompactly on $conv(Hx_0)$, and $H$ is therefore a CAT(0) group.
\begin{proof}The proof is a straightforward application of Lemma \ref{GrowthCont}.\\
If $conv(Hx_0)=conv^k(Hx_0)$, then by Lemma \ref{GrowthCont}, $conv(Hx_0)\subseteq\mathcal{N}_{k\nu}(Hx_0)$, and therefore $conv(Hx_0)/H\subseteq\mathcal{N}_{k\nu}(Hx_0)/H$, i.e. $conv(Hx_0)/H\subseteq\mathcal{N}_{k\nu}(x_0)$.  This shows that the quotient $conv(Hx_0)$ is compact, as desired.
\end{proof}
\end{lem}

  Before we proceed to give a bound for the Brunn number for Euclidean spaces, we make the following important observation:
In analyzing the convex hull of an arbitrary set using Lemma \ref{SeqConv}, it suffices to only consider finite sets, which are substantially easier to work with.
\begin{lem}\label{FinSet}Let $X$ be a geodesic space, let $Y\subseteq X$, and let $i\in\mathbb{N}^+$.  If $conv^i(S)=conv(S)$ for every finite subset $S\subseteq Y$, then $conv^i(Y)$ is convex.
\begin{proof}
Let $x,y\in conv^i(Y)$. Then, we can find $a_1,a_2,b_1,b_2\in conv^{i-1}(Y)$ such that $x\in [a_1,a_2]$ and $y\in [b_1,b_2]$.  Similarly, we can find $c_1,c_2,d_1,d_2\in conv^{i-2}(Y)$ such that $a_1\in [c_1,c_2]$ and $a_2\in [d_1,d_2]$, etc. Proceeding recursively, we see that we can find points $x_1,\ldots , x_m\in Y$ such that $x,y\in conv^i(\left\{x_1,\ldots , x_m\right\})=conv(\left\{x_1,\ldots , x_m\right\})$. Hence, $[x,y]\subseteq conv^i(\left\{x_1,\ldots , x_m\right\})\subseteq conv^i(Y)$.
\end{proof}
\end{lem}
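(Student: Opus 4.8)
The plan is to reduce convexity of $conv^i(Y)$ to the finite-set hypothesis by showing that any two points of $conv^i(Y)$ are already captured by a single finite subset of $Y$. First I would establish monotonicity of the convexification operators: if $S\subseteq T$ then $conv^j(S)\subseteq conv^j(T)$ for every $j$. This follows by a routine induction on $j$ straight from the definition $conv^1(S)=\bigcup_{s_1,s_2\in S}[s_1,s_2]$ together with $conv^j=conv^1\circ conv^{j-1}$.

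Next, given $x,y\in conv^i(Y)$, I would trace the recursive construction of each point back down to $Y$. Since $x\in conv^i(Y)=conv^1(conv^{i-1}(Y))$, there exist $a_1,a_2\in conv^{i-1}(Y)$ with $x\in[a_1,a_2]$; each of $a_1,a_2$ in turn lies on a geodesic between two points of $conv^{i-2}(Y)$, and so on. Unwinding this binary branching for $i$ levels produces a finite set $S_x\subseteq Y$ of at most $2^i$ points, and an induction on the levels of the tree (each level-$j$ node lies in $conv^1$ of two level-$(j-1)$ nodes, hence in $conv^j(S_x)$) shows $x\in conv^i(S_x)$. Likewise I obtain a finite $S_y\subseteq Y$ with $y\in conv^i(S_y)$. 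Setting $S=S_x\cup S_y$, monotonicity gives $x,y\in conv^i(S)$.

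Finally I would invoke the hypothesis: because $S$ is finite, $conv^i(S)=conv(S)$, so $conv^i(S)$ is convex and therefore $[x,y]\subseteq conv^i(S)$. Applying monotonicity once more, $conv^i(S)\subseteq conv^i(Y)$, whence $[x,y]\subseteq conv^i(Y)$. As $x,y$ were arbitrary, $conv^i(Y)$ is convex, which is exactly the claim.

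The main obstacle I anticipate is the bookkeeping in the second step: one must check that the finitely many points appearing in the recursive description of $x$ genuinely lie in $Y$ rather than merely in some intermediate $conv^j(Y)$, and that the same index $i$ is enough to recover $x$ from them inside $conv^i(S)$. This is in essence the observation that producing a point of $conv^i$ consumes only a bounded, finite amount of data drawn from $Y$; once this tree-unwinding is phrased carefully, the monotonicity and the application of the hypothesis are purely formal.
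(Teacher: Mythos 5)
Your proposal is correct and follows essentially the same route as the paper's own proof: unwind the recursive definition of $conv^i$ to trace $x$ and $y$ back to a finite subset $S\subseteq Y$, apply the hypothesis $conv^i(S)=conv(S)$ to that finite set, and conclude via $[x,y]\subseteq conv^i(S)\subseteq conv^i(Y)$. Your explicit treatment of monotonicity and the $2^i$-point binary-tree bookkeeping merely makes precise what the paper leaves as ``proceeding recursively,'' so there is no substantive difference.
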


As we were unable to procure Brunn's original paper, we present our own proof of the intuitively obvious fact that the Brunn number of any subset of $\mathbb{R}^n$ is less or equal to $n$.  Our proof uses Caratheodory's theorem which we recall below.
\subsection{\bf Convex Hulls in Euclidean Spaces}
\begin{thm}(Caratheodory) If $E$ is a vector space of dimension $d$, then, for every subset $X$ of $E$, every element in the convex hull $conv(X)$ is an affine convex combination of $d+1$ elements of $X$.
\begin{proof}See Proposition 5.2.3 in \cite{ConvNonp}.
\end{proof}
\end{thm}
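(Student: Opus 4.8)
The plan is to start from the definition of the convex hull: any point $p\in conv(X)$ is, by definition, a finite convex combination $p=\sum_{i=1}^{k}\lambda_i x_i$ with each $x_i\in X$, each $\lambda_i>0$, and $\sum_{i=1}^{k}\lambda_i=1$. The whole theorem reduces to a single claim: if $k>d+1$, then $p$ can be rewritten as a convex combination of strictly fewer points of $X$. Iterating this reduction (a finite induction on the number of terms) then forces the count down to at most $d+1$, and one can pad with trivial terms to reach exactly $d+1$ if the statement is read literally.

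The engine of the reduction is a dimension count. Assuming $k\geq d+2$, the $k-1$ difference vectors $x_2-x_1,\ldots,x_k-x_1$ lie in the $d$-dimensional space $E$ and number at least $d+1$, so they are linearly dependent. Hence there exist scalars $\mu_2,\ldots,\mu_k$, not all zero, with $\sum_{i=2}^{k}\mu_i(x_i-x_1)=0$. Setting $\mu_1=-\sum_{i=2}^{k}\mu_i$, I would record the two relations $\sum_{i=1}^{k}\mu_i x_i=0$ and $\sum_{i=1}^{k}\mu_i=0$, where the $\mu_i$ are not all zero.

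Next I would perturb the coefficients. For any real $t$, the second relation gives $\sum_{i}(\lambda_i-t\mu_i)=1$ and the first gives $\sum_{i}(\lambda_i-t\mu_i)x_i=p$, so $p=\sum_{i}(\lambda_i-t\mu_i)x_i$ remains a representation whose coefficients sum to $1$. Since $\sum_i\mu_i=0$ and the $\mu_i$ are not all zero, at least one $\mu_i$ is strictly positive. The critical choice is $t=\min\{\lambda_i/\mu_i:\mu_i>0\}$, attained at some index $j$, and this $t$ is nonnegative. With this value, every coefficient $\lambda_i-t\mu_i$ is nonnegative (for $\mu_i\leq 0$ because $\lambda_i\geq 0$ and $-t\mu_i\geq 0$, and for $\mu_i>0$ by the minimality defining $t$), while the $j$-th coefficient equals $0$ exactly. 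Thus $p$ is exhibited as a convex combination of the $k-1$ points $\{x_i:i\neq j\}$, completing the reduction step.

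The main obstacle — really the only delicate point — is the verification that the single scalar $t$ simultaneously keeps all coefficients nonnegative and annihilates one of them; the rest is bookkeeping around the two linear relations. I expect the linear-dependence argument and the casewise nonnegativity check to be where all the content lies, with the induction on $k$ being routine once the reduction is in hand.
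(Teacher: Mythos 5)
Your proof is correct, and it is the classical argument for Carath\'eodory's theorem: extract an affine dependence $\sum_i \mu_i x_i = 0$, $\sum_i \mu_i = 0$ from any representation with more than $d+1$ terms, then perturb by the critical value $t=\min\{\lambda_i/\mu_i : \mu_i>0\}$ to kill one coefficient while keeping the rest nonnegative. The one point of comparison to make is that the paper does not prove this statement at all: it simply cites Proposition 5.2.3 of Papadopoulos's book, so your write-up supplies the argument that the paper outsources (and it is essentially the proof found in such references). Your reduction step is sound --- the dimension count, the existence of a strictly positive $\mu_i$, and the casewise nonnegativity check are all handled correctly, and the induction terminates since each step strictly decreases the number of terms. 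The only item I would tighten is the opening phrase ``by definition'': whether $conv(X)$ \emph{is} the set of finite convex combinations depends on which definition of convex hull one adopts (the paper's own definition, via intersections of convex sets or equivalently the iterated convexification of Lemma 2.1, is not literally this), so you should add the one-line observation that the set of finite convex combinations is itself convex, contains $X$, and lies in every convex set containing $X$, hence equals the hull. With that remark included, your proof is complete and self-contained, which is arguably preferable in context to a bare citation.
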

\begin{lem}\label{FinRn}For any finite set $S\subseteq\mathbb{R}^n$, $conv^n(S)=conv(S)$.
\end{lem}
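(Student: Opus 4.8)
The plan is to combine Carathéodory's theorem with a short induction showing that the $i$-fold convexification already captures every convex combination of $i+1$ points. Since $conv^n(S)\subseteq conv(S)$ holds trivially (as noted at the start of the proof of Lemma~\ref{SeqConv}), the only real content is the reverse inclusion $conv(S)\subseteq conv^n(S)$.

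First I would isolate the following claim: for every $k\geq 1$, any convex combination of $k+1$ points of $S$ lies in $conv^k(S)$. I would prove this by induction on $k$. The base case $k=1$ is immediate from the definition of $conv^1$, since a convex combination of two points $x_0,x_1\in S$ is exactly a point of the segment $[x_0,x_1]\subseteq conv^1(S)$.

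For the inductive step, consider $p=\sum_{i=0}^{k}\lambda_i x_i$ with $\lambda_i\geq 0$, $\sum_i\lambda_i=1$, and $x_i\in S$. If $\lambda_k=1$ then $p=x_k\in S\subseteq conv^{k}(S)$ and we are done, so assume $\mu:=1-\lambda_k=\sum_{i=0}^{k-1}\lambda_i>0$. Setting $q=\sum_{i=0}^{k-1}(\lambda_i/\mu)x_i$ expresses $q$ as a convex combination of the $k$ points $x_0,\dots,x_{k-1}$, so the inductive hypothesis gives $q\in conv^{k-1}(S)$. Now $p=\mu q+(1-\mu)x_k$ lies on the segment $[q,x_k]$, and since both $q$ and $x_k$ belong to $conv^{k-1}(S)$, we get $p\in conv^1(conv^{k-1}(S))=conv^k(S)$, completing the induction.

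Finally I would specialize to $k=n$. By Carathéodory's theorem every element of $conv(S)$ is a convex combination of $n+1$ elements of $S$, hence lies in $conv^n(S)$ by the claim; this gives $conv(S)\subseteq conv^n(S)$ and therefore equality. I do not anticipate a serious obstacle here: the essential mechanism is simply that peeling off one vertex at a time reduces a $(k+1)$-fold combination to a $k$-fold one at the cost of a single extra convexification step, so that $n+1$ vertices are absorbed in exactly $n$ steps. The only points requiring any care are the bookkeeping of the induction indices and the degenerate case where all the mass concentrates on a single vertex, both of which are dispatched above.
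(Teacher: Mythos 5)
Your proof is correct, and it shares the paper's key reduction: both invoke Carath\'eodory's theorem to write every point of $conv(S)$ as a convex combination of $n+1$ points of $S$, so that the lemma reduces to absorbing $n+1$ points in $n$ convexification steps. Where you differ is in how that absorption is carried out. The paper works with the standard simplex $\Delta_n=conv(\{e_1,\dots,e_{n+1}\})\subseteq\mathbb{R}^{n+1}$, shows by induction that $conv^i$ of the vertex set contains every $i$-face (each $i$-face being the join of an $(i-1)$-face with a vertex), and then transports the conclusion to the possibly affinely dependent points $s_1,\dots,s_{n+1}$ via the affine map $e_i\mapsto s_i$, using that affine maps send segments to segments. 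You instead run the induction directly on convex combinations in $\mathbb{R}^n$: a combination of $k+1$ points lies on the segment from a combination of the first $k$ points to the last point, hence in $conv^1(conv^{k-1}(S))=conv^k(S)$. The two inductions encode the same geometric fact --- peeling off one vertex costs one convexification step --- but yours is more self-contained: it needs no auxiliary simplex model and no affine push-forward, and it handles degenerate (affinely dependent) configurations automatically, which is precisely what the paper's map $\phi$ is there to do. The paper's version, in exchange, makes the face structure behind the bound explicit. Your treatment of the edge case $\lambda_k=1$ and of the trivial inclusion $conv^n(S)\subseteq conv(S)$ is fine, so there is no gap.
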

\begin{proof}By Caratheodory's theorem, $conv(S)=\bigcup conv(\left\{s_1,...,s_{n+1}\right\})$, where the union is taken over all $s_1,...,s_{n+1}\in S$.  Therefore, it suffices to show that for any set $\left\{s_1,...,s_{n+1}\right\}\subseteq\mathbb{R}^n$, $conv^n(\left\{s_1,...,s_{n+1}\right\})=conv(\left\{s_1,...,s_{n+1}\right\})$.  Consider the points $e_1,...,e_{n+1}\in\mathbb{R}^{n+1}$.  Their convex hull is the standard $n$-simplex $\Delta_n$ in $\mathbb{R}^{n+1}$.  Suppose $conv^{i-1}(\left\{e_1,...,e_{n+1}\right\})$ contains all the $(i-1)$-faces of $\Delta_n$, then $conv^i(\left\{e_1,...,e_{n+1}\right\})$ contains all joins of the form $join\left\{F,e_j\right\}$, $1\leq j\leq n+1$, where $F$ is an $(i-1)$-face.  But all of the $i$-faces are joins of this form.  By induction, $conv^i\left\{e_1,...,e_{n+1}\right\}$ contains all the $i$-faces.  Hence, $conv^n(\left\{e_1,...,e_{n+1}\right\})$ contains and therefore equals $\Delta_n$.  Now, let $\phi$ be the affine map which sends $e_i$ to $s_i$.  This map sends lines to lines, therefore\\ $\phi(conv^i(\left\{e_1,...,e_{n+1}\right\}))\subseteq conv^i(\left\{s_1,...,s_{n+1}\right\})$.  Now, $\phi(conv^n(\left\{e_1,...,e_{n+1}\right\}))$ is convex, contains $\left\{s_1,...,s_{n+1}\right\}$, and is contained in $conv^n(\left\{s_1,...,s_{n+1}\right\})$.  Therefore,\\$conv(\left\{s_1,...,s_{n+1}\right\})=conv^n(\left\{s_1,...,s_{n+1}\right\})$, as desired.
\end{proof}
Combining Lemma \ref{FinSet} and Lemma \ref{FinRn}, we obtain the desired bound on the Brunn number:
\begin{cor}\label{BrunnRn}For any subset $Y\subseteq\mathbb{R}^n$, $conv^n(Y)=conv(Y)$.  Therefore, $k\leq n$ for any subset of $\mathbb{R}^n$.
\end{cor}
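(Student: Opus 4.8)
The plan is to deduce the corollary directly by feeding Lemma~\ref{FinRn} into Lemma~\ref{FinSet} and then invoking the minimality of the convex hull; the genuine work has already been done in establishing those two lemmas, so the corollary itself should follow with almost no additional effort.

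First I would observe that Lemma~\ref{FinRn} asserts precisely that $conv^n(S)=conv(S)$ for \emph{every} finite subset $S\subseteq\mathbb{R}^n$, and in particular for every finite subset $S\subseteq Y$. This is exactly the hypothesis required to apply Lemma~\ref{FinSet} with the index $i=n$. Applying that lemma, I conclude that $conv^n(Y)$ is a convex subset of $\mathbb{R}^n$.

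Next I would upgrade convexity to the claimed equality. On the one hand, the set $conv^n(Y)$ is convex and contains $Y=conv^0(Y)$, so by the defining minimality of the convex hull we have $conv(Y)\subseteq conv^n(Y)$. On the other hand, as noted in the proof of Lemma~\ref{SeqConv}, the reverse inclusion $conv^n(Y)\subseteq conv(Y)$ holds for every index. Combining the two inclusions yields $conv^n(Y)=conv(Y)$, which is the first assertion. For the statement about the Brunn number, recall that $k=\inf\{i:conv^i(Y)=conv(Y)\}$; since we have just exhibited $n$ as an element of the set over which this infimum is taken, it follows immediately that $k\leq n$.

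I do not expect a substantive obstacle here, precisely because the two hard ingredients are quoted. The only point requiring a small amount of care is that Lemma~\ref{FinSet} delivers \emph{convexity} of $conv^n(Y)$ rather than the equality with $conv(Y)$ outright, so one must separately invoke both the standard containment $conv^n(Y)\subseteq conv(Y)$ and the minimality of the hull to close the argument. Once that two-sided containment is in place, the Brunn number bound is a formal consequence of the definition of the infimum.
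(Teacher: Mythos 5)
Your proposal is correct and takes essentially the same route as the paper, which offers no separate proof at all but simply states that the corollary follows by ``combining Lemma \ref{FinSet} and Lemma \ref{FinRn}'' --- exactly the two ingredients you feed together. Your extra care in noting that Lemma \ref{FinSet} yields only \emph{convexity} of $conv^n(Y)$, so that one must still invoke the containment $conv^n(Y)\subseteq conv(Y)$ and the minimality of the hull to get equality, merely makes explicit the small step the paper leaves implicit.
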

We give a straightforward application of Corollary \ref{BrunnRn} to Hilbert geometries.  First, let us recall some basic definitions.  Let $\Omega$ be a bounded open convex subset of a Euclidean space.  Given any two distinct points $x,y\in\Omega$, we define the \textit{Hilbert distance} between $x$ and $y$ to be $H_{\Omega}(x,y)=\log{\left(\dfrac{|x-x^{'}||y-y^{'}|}{|y-x^{'}||x-y^{'}|} \right)}$, where $x^{'}$ denotes the point of intersection of the ray from $x$ through $y$ and the boundary of $\Omega$, and $y^{'}$ denotes the point of intersection of the ray from $y$ to $x$ and the boundary of $\Omega$.  Naturally, we define $H_{\Omega}(x,x)=0$ for any $x\in\Omega$.  It is a well-known fact that $H_{\Omega}$ is a metric on $\Omega$, and that each affine segment is a geodesic segment for the metric $H_{\Omega}$, see Theorem 5.6.7 in \cite{ConvNonp}, also Chapter 5.6 in \cite{ConvNonp}.
\begin{cor}In any uniquely geodesic Hilbert geometry $(\Omega,H_{\Omega})$, the Brunn number of any subset is bounded above by the dimension of the underlying Euclidean space.
\begin{proof}Since each affine segment in the ambient Euclidean space is a geodesic for $H_{\Omega}$ and vice versa, and since $(\Omega,H_{\Omega})$ is assumed to be uniquely geodesic, the convex hull of a finite set of points in $\Omega$ coincides with its convex hull in the ambiant Euclidean space.  Therefore, Corollary \ref{BrunnRn} immediately applies.
\end{proof}
\end{cor}
Unfortunately, obtaining a bound on the Brunn number in an arbitrary CAT(0) space in the absence of any restrictions on the subsets in consideration is impossible since the class of CAT(0) spaces is too large, as the following example shows:\\\\
\textbf{Example:}  Let $\mathcal{H}$ be the infinite dimensional real Hilbert space $l^2(\mathbb{R})$.  Recall that the elements of $\mathcal{H}$ are sequences of real numbers $\textbf{x}=(x_1,x_2,...)$ such that $\sum x_i^2<\infty$, and that $(\textbf{x},\textbf{y})=\sum x_iy_i$ defines an inner product on $\mathcal{H}$, where $\textbf{x}=(x_1,x_2,...),\textbf{y}=(y_1,y_2,...)\in\mathcal{H}$.  This inner product induces a norm on $\mathcal{H}$ called the $l^2$-norm, under which $\mathcal{H}$ becomes a complete uniquely geodesic metric space.  Further, it is obvious that $\mathcal{H}$ is a CAT(0) space as well, since any three non-colinear points lie in a 2-dimensional affine subspace of $\mathcal{H}$ isometric to the Euclidean plane, and thus the CAT(0) comparison inequality holds trivially.  Now, let $S=\left\{\textbf{e}_1, \textbf{e}_2,...,\textbf{e}_l,...\right\}$, where $\textbf{e}_j=(0,0,...,0,1,0,...0,...)$, with a $1$ in the $j$-th position only. 
An easy proof by induction shows that for each $i\in\mathbb{N}$, $conv^i(S)$ is a countable union of finite dimensional simplices in $\mathcal{H}$, and therefore each $conv^i(S)$ has finite Hausdorff dimension.  On the other hand, it is obvious that $conv(S)$ is infinite dimensional, hence we conclude that $conv^i(S)\neq conv(S)$ for any $i\in\mathbb{N}$.\\\\
We are, however, able to bound the Brunn number in certain "planar" piecewise Euclidean CAT(0) polygonal complexes.  Obtaining this bound is the subject of the discussion below.  Since the remainder of the paper does not rely on the results in Section \ref{CHCP}, the reader may safely omit it. 
\subsection{\bf Convex Hulls in CAT(0) Planes}\label{CHCP}
\begin{defn}A \textit{CAT(0) plane} is a simply connected piecewise Euclidean polygonal complex $X$ with $Shapes(X)$ finite, such that $Lk(v)$ is isometric to a circle of length $\geq 2\pi$ for every vertex $v\in X$.
\end{defn}
Recall that if $X$ is a polyhedral complex, $Shapes(X)$ denotes the set of isometry classes of polyhedra, and for any $v\in X^{(0)}$, $Lk(v)$ denotes the equivalence classes of geodesics issuing at $v$ where two geodesics are considered equivalent if they make Alexandrov angle zero.  A reformulation of non-positive curvature, in dimension 2, in more combinatorial terms due to Gromov is the following result:
\begin{thm}\label{GrLkCond}(Gromov) A polygonal complex $X$ with $Shapes(X)$ finite is CAT(0) if and only if $Lk(v)$ contains no embedded circle of length less than $2\pi$.
\end{thm}
For an in-depth discussion on polyhedral complexes, as well as a proof of Theorem \ref{GrLkCond}, which is a special case of \textit{Gromov's Link Condition}, we refer the reader to Chapter II.5 in \cite{BH}.
We immediately note that in view of Theorem \ref{GrLkCond}, a CAT(0) plane is a CAT(0) metric space.  In this section, we prove that the Brunn number of any subset of a CAT(0) plane is $\leq2$.  The proof of this intuitively "obvious" result turned out to be surprisingly technical.  Our proof employs a local-to-global technique which makes use of the Cartan-Hadamard theorem.  The idea is that under mild hypotheses, convexity on the small scale implies global convexity.  Before we are able to state and prove the promised results, we need to transpose some familiar definitions to the "small scale":

\begin{defn}Let $(X,d)$ be a metric space.

\begin{enumerate}
	\item The metric $d$ is said to be \textit{locally convex} if every point in $X$ has a neighborhood in which the induced metric is convex.
	\item The metric space is said to be \textit{locally CAT(0)} if every point in $X$ has a convex neighborhood $\mathcal{U}$ with the property that 											$(\mathcal{U},d)$ is a CAT(0) metric space.
	\item Let $f:X\rightarrow Y$ be a map between two metric spaces.  We shall say that $f$ is a \textit{local isometry} if every point $x\in X$ has a neighborhood 					$\mathcal{U}$, such that $f$ restricted to $\mathcal{U}$ is an isometric embedding.
\end{enumerate}

\end{defn}

\begin{thm}\label{CartanHadamard}(Cartan-Hadamard) Let $(X,d)$ be a complete connected metric space.  If the metric on $X$ is locally convex, then the induced length metric on the universal covering $\widetilde{X}$ is convex.  In particular, there is a unique geodesic segment joining each pair of points in $\widetilde{X}$.  Further, if $X$ is a locally CAT(0) space, then $\widetilde{X}$ with the induced length metric is a CAT(0) space and the covering map $p:\widetilde{X}\rightarrow X$ is a local isometry.
\begin{proof}See Theorem 4.1 in \cite{BH}.
\end{proof}
\end{thm}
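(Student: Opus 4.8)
The plan is to reduce the assertion to a single local-to-global statement about the distance function and then to globalize it by an open--closed connectedness argument, with simple connectivity doing the essential work.

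First I would dispose of the lifting and of the last clause. Being locally convex, and being locally CAT(0), are properties of the metric on small balls, hence invariant under local isometries. Since $X$ is complete and connected it admits a universal covering $p\colon\widetilde{X}\to X$, and endowing $\widetilde{X}$ with the induced length metric (obtained by lifting paths) makes $p$ restrict to an isometry on every sufficiently small ball; thus $p$ is automatically a local isometry, and $\widetilde{X}$ inherits completeness, local convexity, and, where assumed, the local CAT(0) condition. This settles the final clause and reduces the theorem to the following: a complete, simply connected, locally convex space is globally convex and uniquely geodesic. I therefore replace $X$ by $\widetilde{X}$ for the rest of the argument.

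Second, I would set up the elementary theory of local geodesics. Local convexity makes small balls convex and uniquely geodesic, so every point has a neighborhood in which geodesics exist, are unique, and depend continuously on their endpoints; combined with completeness this lets one extend a local geodesic indefinitely and straighten any path, on a fine enough subdivision, to a local geodesic homotopic to it rel endpoints. Because $\widetilde{X}$ is simply connected, any two local geodesics joining the same pair of points are homotopic rel endpoints through a family of local geodesics, and this homotopy --- together with a fixed basepoint from which all geodesics are viewed as issuing --- is the object on which the whole argument turns.

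Third, and this is the technical heart, I would prove that for any two local geodesics $c_0,c_1\colon[0,1]\to\widetilde{X}$ parametrized proportionally to arc length the function $t\mapsto d(c_0(t),c_1(t))$ is convex, which is precisely the assertion that the induced length metric is convex. The inequality holds on the small scale directly from local convexity of the metric; to promote it to the whole interval I would run an open--closed argument, showing that the set of parameters up to which the convexity estimate is valid is nonempty, open (by local convexity), and closed (by continuity of the distance function and completeness), hence all of $[0,1]$. Convexity of this distance function immediately yields uniqueness of geodesics --- two geodesics with the same endpoints give a convex distance function vanishing at both endpoints, hence identically zero --- shows that local geodesics are global geodesics, and gives global convexity of the metric. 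The CAT(0) conclusion follows from the same globalization scheme applied to the Alexandrov comparison inequality: the local CAT(0) hypothesis furnishes the comparison estimate for small triangles, and the open--closed argument upgrades it to all geodesic triangles.

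The step I expect to be the main obstacle is exactly the globalization in the previous paragraph. Local convexity of a metric is strictly weaker than global convexity: on a flat cylinder the metric is locally convex, yet two distinct geodesics joining one pair of points produce a distance function that vanishes at both endpoints while remaining positive in between, so it is not convex, and geodesics are not unique. Hence no purely local estimate can suffice, and the subdivision argument must genuinely invoke simple connectivity to rule out such competing geodesics. Keeping track of the homotopy of geodesics, and verifying that the closed half of the open--closed argument survives at the parameters where the subdivision has to be refined, is where the real labor lies; it is exactly this analysis that is carried out in Theorem 4.1 of \cite{BH}.
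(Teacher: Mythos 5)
Your proposal is correct in outline and coincides with the paper's approach: the paper gives no argument of its own, simply citing Theorem 4.1 of \cite{BH}, and your sketch --- lifting the local hypotheses to $\widetilde{X}$, proving convexity of $t\mapsto d(c_0(t),c_1(t))$ for local geodesics by a patching/open--closed argument in which simple connectivity is essential (your cylinder example correctly shows it cannot be dispensed with), and globalizing the comparison inequality via Alexandrov-style subdivision --- is precisely the structure of the proof given there, to which you too ultimately defer the technical heart. The only point worth flagging is that the existence of the universal covering does not follow from completeness and connectedness alone; it uses the local contractibility furnished by convex balls (indeed \cite{BH} sidesteps this by constructing $\widetilde{X}$ directly as a space of local geodesics issuing from a basepoint, with the endpoint map shown to be a covering).
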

In view of the Cartan-Hadamard theorem above, we shall say that a subset $Y$ of the CAT(0) space $X$ is \textit{locally convex} if every point of $Y$ has a convex neighborhood $\mathcal{U}\subseteq X$ such that $\mathcal{U}\cap Y$ is convex.  First, we show that $conv^i(S)$ is compact for every compact set $S$:
\begin{lem}\label{Cpctconv}Let $X$ be a CAT(0) space, and let $S\subseteq X$ be a compact set.  Then, $conv^i(S)\subseteq X$ is compact for every $i\in\mathbb{N}$.
\begin{proof}The proof is by induction on $i$, the case $i=0$ being trivial.  Note that we have an obvious map $\varphi :conv^{i-1}(S)\times conv^{i-1}(S)\times I\rightarrow conv^i(S)$ given by $\varphi(x,y,t)=[x,y](t)$. Since in any CAT(0) space geodesics vary continuously with endpoints, see Proposition 2.2 in Chapter II.2 of \cite{BH}, the map $\varphi$ is a continuous surjection which maps the compact set $conv^{i-1}(S)\times conv^{i-1}(S)\times I$ to $conv^i(S)$ thus proving our claim.
\end{proof} 
\end{lem}
The following proposition opens the door to local-to-global convexity arguments.  It has recently been brought to the attention of the author that a recent paper of Bux and Witzel also proves a similar result in the slightly more general context of CAT($\kappa$) spaces, see Theorem 1.10 in \cite{KU}.
\begin{pro}\label{FinSubsets}Let $X$ be a complete and connected CAT(0) space.  Then, $conv^n(Y)=conv(Y)$ for all $Y\subseteq X$ if and only if $conv^n(S)$ is locally convex for every finite subset $S\subseteq X$.
\begin{proof}The forward direction is trivial, and so we only prove the converse.  Suppose that $conv^n(S)$ is locally convex for every finite subset $S\subseteq X$. As Lemma \ref{Cpctconv} shows that $conv^n(S)$ is a compact and therefore complete, connected, and locally convex subset of a CAT(0) space, Theorem \ref{CartanHadamard} tells us that the universal cover $\widetilde{conv^n(S)}$ endowed with the length metric is a CAT(0) space, and that the covering map $p:\widetilde{conv^n(S)} \rightarrow conv^n(S)$ is a local isometry.  Let $x,y\in conv^n(S)$ and choose any $\tilde{x} \in p^{-1}(x)$, and $\tilde{y} \in p^{-1}(y)$. Let $\alpha(t)$ be the unique geodesic in $\widetilde{conv^n(S)}$ joining $\tilde{x}$ to $\tilde{y}$.  Then, because $conv^n(S)$ is compact, a simple argument using the Lebesgue covering lemma shows that $p\circ \alpha(t)$ is a local geodesic in $X$ joining $x$ to $y$. Since in a CAT(0) space any local geodesic is a geodesic, see Proposition 1.4 in Chapter II.1 of \cite{BH}, we see that $p\circ \alpha$ is the geodesic in $X$ joining $x$ to $y$. But the image of $p\circ \alpha$ is contained in $conv^n(S)$. This shows that $conv^n(S)$ is convex, and Lemma \ref{FinSet} yields the desired conclusion.
\end{proof} 
\end{pro}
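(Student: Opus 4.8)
The statement is an equivalence, and I would dispatch its two directions quite asymmetrically. For the forward implication, assume $conv^n(Y)=conv(Y)$ for every $Y\subseteq X$ and apply this to a finite set $Y=S$: we get $conv^n(S)=conv(S)$, which is convex. A convex subset is trivially locally convex in the sense defined above — take the convex neighborhood to be $X$ itself (the whole CAT(0) space is convex and is a neighborhood of each of its points), so that its intersection with $conv^n(S)$ is $conv^n(S)$, which is convex. So the forward direction costs nothing, and I would spend all the effort on the reverse one.

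For the reverse implication the plan is to reduce to finite sets and then run a local-to-global argument. By Lemma \ref{FinSet}, to conclude that $conv^n(Y)$ is convex — hence equal to $conv(Y)$, since $conv^n(Y)\subseteq conv(Y)$ always and a convex set containing $Y$ contains $conv(Y)$ — it suffices to prove $conv^n(S)=conv(S)$ for every finite $S$. So I would fix a finite $S$ and aim to show $conv^n(S)$ is convex.

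First I would establish the topological prerequisites needed to invoke Cartan-Hadamard, namely that $conv^n(S)$ is compact and connected. Compactness follows by induction on $i$: one writes $conv^i(S)$ as the image of the compact set $conv^{i-1}(S)\times conv^{i-1}(S)\times[0,1]$ under the evaluation map $(x,y,t)\mapsto[x,y](t)$, which is continuous because geodesics in a CAT(0) space vary continuously with their endpoints. Connectedness follows from the same description (for $i\geq 1$, $conv^i(S)$ is a continuous image of a connected set). With local convexity of $conv^n(S)$ in hand by hypothesis, Theorem \ref{CartanHadamard} then applies: the universal cover $\widetilde{conv^n(S)}$ with its induced length metric is CAT(0), and the covering map $p$ is a local isometry.

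The hard part will be transferring a geodesic from the cover into a genuine geodesic of the \emph{ambient} space $X$. Given $x,y\in conv^n(S)$, I would choose lifts, take the unique geodesic $\alpha$ joining them in $\widetilde{conv^n(S)}$, and push it down to $p\circ\alpha$. The delicate point is that $p$ is a local isometry only for the induced length metric on $conv^n(S)$, whereas I need $p\circ\alpha$ to be a local geodesic for the metric of $X$. This is exactly where local convexity does its work: on a convex neighborhood the length metric of the subspace agrees with the restriction of the ambient metric, so subspace-geodesics coincide there with ambient-geodesics. A Lebesgue-number argument subdivides $\alpha$ into finitely many arcs, each mapping into such a neighborhood, showing that $p\circ\alpha$ is a local geodesic in $X$. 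Since local geodesics in a CAT(0) space are geodesics, $p\circ\alpha$ is the geodesic $[x,y]$, and its image lies in $conv^n(S)$. Hence $conv^n(S)$ is convex, which via Lemma \ref{FinSet} completes the reverse direction and the proof.
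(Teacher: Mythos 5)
Your proposal is correct and follows essentially the same route as the paper: compactness of $conv^i(S)$ via the evaluation map, Cartan--Hadamard on the locally convex compact set, projecting the lifted geodesic down via a Lebesgue-number argument, using that local geodesics in CAT(0) spaces are geodesics, and finishing with Lemma \ref{FinSet}. You are in fact slightly more careful than the paper, since you supply the trivial forward implication and the justification of connectedness (and the length-metric versus ambient-metric subtlety) which the paper passes over in silence.
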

In the course of proving Proposition \ref{CAT0Plane} we will encounter the phenomenon of \textit{bifurcating} geodesics.  We shall call geodesics which coincide up to a point of \textit{bifurcation} or divergence, bifurcating geodesics.  The key fact we shall need is that in a CAT(0) plane, at the point of bifurcation, the geodesic which splits can be extended in an infinite number of ways.  To prove this result, we need the following reformulation of the CAT(0) condition:
\begin{lem}\label{Alexleqcomp}Let $X$ be a geodesic metric space.  Then, $X$ is a CAT(0) space if and only if the Alexandrov angle between the sides of any geodesic triangle in $X$ with distinct vertices is no greater than the angle between the corresponding sides of its comparison triangle in $\mathbb{E}^2$.
\begin{proof}See Proposition 1.7(4) in Chapter II.1 of \cite{BH}.
\end{proof}
\end{lem}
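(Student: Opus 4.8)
The plan is to prove the two implications separately, using the Euclidean law of cosines as the dictionary between the distance comparison inequality defining CAT(0) and the comparison angles $\bar{\angle}$ computed in $\mathbb{E}^2$. Recall that the Alexandrov angle between two geodesics $c_1,c_2$ issuing from a common point $p$ is $\angle_p(c_1,c_2)=\limsup_{s,t\to 0}\bar{\angle}(c_1(s),p,c_2(t))$, where $\bar{\angle}(x,p,y)$ is the vertex angle at $\bar{p}$ of the Euclidean comparison triangle for the triple $(x,p,y)$, determined by the law of cosines.

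For the forward direction (CAT(0) $\Rightarrow$ angle condition), I would take a geodesic triangle $\Delta(p,q,r)$ with comparison triangle $\bar{\Delta}(\bar{p},\bar{q},\bar{r})$ and points $x\in[p,q]$, $y\in[p,r]$. Applying the CAT(0) distance inequality to $x,y$ gives $d(x,y)\le d(\bar{x},\bar{y})$, where $\bar{x},\bar{y}$ are the comparison points lying on the two sides of $\bar{\Delta}$ emanating from $\bar{p}$. Since in the law of cosines a shorter side opposite the vertex $\bar{p}$ (with the adjacent lengths held fixed at $d(p,x)$ and $d(p,y)$) forces a smaller vertex angle, we obtain $\bar{\angle}(x,p,y)\le\bar{\angle}(q,p,r)$ for all such $x,y$. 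This is precisely the monotonicity of comparison angles; taking the $\limsup$ as $x,y\to p$ yields $\angle_p([p,q],[p,r])\le\bar{\angle}(q,p,r)$, the asserted inequality.

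The reverse direction (angle condition $\Rightarrow$ CAT(0)) is where I expect the main obstacle, and it relies on two standard but delicate auxiliary facts. By the usual reduction it suffices to verify $d(p,x)\le d(\bar{p},\bar{x})$ when $p$ is a vertex and $x$ lies on the opposite side $[q,r]$. Since $x$ lies on the geodesic $[q,r]$, the directions from $x$ toward $q$ and toward $r$ are opposite, so $\angle_x([x,q],[x,r])=\pi$; the general triangle inequality for Alexandrov angles then gives $\angle_x(p,q)+\angle_x(p,r)\ge\pi$. Applying the hypothesis to the two sub-triangles $\Delta(p,q,x)$ and $\Delta(p,x,r)$, the corresponding comparison angles satisfy $\bar{\angle}(p,x,q)+\bar{\angle}(p,x,r)\ge\pi$.

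The final step is to glue the two Euclidean comparison triangles for $\Delta(p,q,x)$ and $\Delta(p,x,r)$ along their common side corresponding to $[p,x]$; they meet at the image of $x$ with total angle $\ge\pi$. Alexandrov's lemma on developing plane triangles says exactly that under this condition, comparing the glued configuration with the single comparison triangle for $\Delta(p,q,r)$ (using $d(q,r)=d(q,x)+d(x,r)$) forces $d(p,x)\le d(\bar{p},\bar{x})$. As $\Delta$ and $x$ were arbitrary, this is the CAT(0) inequality. The technical heart of the argument is thus Alexandrov's lemma together with the triangle inequality for the Alexandrov angle on the space of directions at a point; everything else is bookkeeping with the law of cosines.
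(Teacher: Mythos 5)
Your proposal is correct, and it matches the paper's proof in the only sense available: the paper disposes of this lemma by citing Proposition II.1.7(4) of Bridson--Haefliger \cite{BH}, and the argument you give (monotonicity of comparison angles via the law of cosines for the forward direction; the angle triangle inequality at an interior point of $[q,r]$ together with Alexandrov's lemma for the converse) is precisely the proof given in that reference. The only omissions are the degenerate cases where $x$ coincides with a vertex, which are trivial.
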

\begin{lem}\label{GeodBroom}Let $X$ be a CAT(0) plane and let $a_1,a_2,b,q\in X$ be such that $\left[a_1,b\right]\cap\left[a_2,b\right]=\left[q,b\right]$.  Then, for any $c\in\left[a_1,a_2\right]$, the concatenation of the geodesic segment $\left[c,q\right]$ and $\left[q,b\right]$ is a geodesic segment.
\begin{proof}Since $X$ is a CAT(0) plane, $Lk(q)$ is a circle of length at least $2\pi$.  Now, for any $c\in\left[a_1,a_2\right]$, the distance in $Lk(q)$ between the directions determined by $\left[c,q\right]$ and $\left[q,b\right]$ is at least $\pi$, hence the Alexandrov angle $\angle_{q}\left(c,b\right)=\pi$.  By Lemma \ref{Alexleqcomp}, $\angle_{q}\left(c,b\right)\leq\overline{\angle}_{\overline{q}}\left(\overline{c},\overline{b}\right)$, hence $\overline{\angle}_{\overline{q}}\left(\overline{c},\overline{b}\right)=\pi$.  Now, we conclude that $d(\overline{c},\overline{b})=d(\overline{c},\overline{q})+d(\overline{q},\overline{b})$.  Since the distances between the vertices of $\Delta(a,b,c)$ are the same as the distances between the vertices of the comparison triangle $\overline{\Delta}(\overline{a},\overline{b},\overline{c})$, we have shown that $d(c,b)=d(c,q)+d(q,b)$ thus proving that the concatenation of $\left[c,q\right]$ and $\left[q,b\right]$ is indeed a geodesic segment.
\end{proof}
\end{lem}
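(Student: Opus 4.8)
The plan is to reduce the statement first to an angle condition at $q$ and then to a one-dimensional fact about the link circle $Lk(q)$. Since the concatenation of the two geodesics $[c,q]$ and $[q,b]$ is itself a geodesic exactly when $d(c,b)=d(c,q)+d(q,b)$, and since Lemma \ref{Alexleqcomp} gives $\angle_q(c,b)\leq\overline{\angle}_{\overline q}(\overline c,\overline b)$, it suffices to prove that the Alexandrov angle $\angle_q(c,b)$ equals $\pi$: an angle of $\pi$ forces the comparison angle to be $\pi$ as well, so the comparison triangle is degenerate with $\overline q$ between $\overline c$ and $\overline b$, and transporting side lengths back yields the required additivity. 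Thus the entire lemma comes down to showing $\angle_q(c,b)=\pi$.

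Because $X$ is piecewise Euclidean with $Shapes(X)$ finite, the Alexandrov angle at $q$ between two geodesic segments equals the distance in $Lk(q)$ between their initial directions when that distance is at most $\pi$, and equals $\pi$ otherwise; so it is enough to show that $d_{Lk(q)}(\gamma,\beta)\geq\pi$, where $\gamma$ is the direction of $[c,q]$ and $\beta$ the direction of $[q,b]$. Here I would first extract the two ``opposite direction'' facts that the hypothesis $[a_1,b]\cap[a_2,b]=[q,b]$ hands us for free: since $[q,b]$ is a proper terminal subsegment of the geodesic $[a_i,b]$, the point $q$ is interior to $[a_i,b]$, so the direction $\alpha_i$ of $[q,a_i]$ and the direction $\beta$ satisfy $\angle_q(a_i,b)=\pi$ and hence $d_{Lk(q)}(\alpha_i,\beta)\geq\pi$ for $i=1,2$.

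Now I would invoke the defining hypothesis that $Lk(q)$ is isometric to a circle of length $L\geq 2\pi$. On such a circle the locus $A=\{x:d_{Lk(q)}(x,\beta)\geq\pi\}$ is the closed arc of length $L-2\pi$ centered at the point diametrically opposite $\beta$, and by the previous paragraph $\alpha_1,\alpha_2\in A$ while $\beta\notin A$. The arc $J$ joining $\alpha_1$ to $\alpha_2$ that avoids $\beta$ then lies entirely in $A$. Consequently everything reduces to showing that the direction $\gamma$ of $[c,q]$ lies on $J$; granting this, $\gamma\in A$ and $d_{Lk(q)}(\gamma,\beta)\geq\pi$, which is what we need (the degenerate cases $c=a_i$ or $a_i=q$ being immediate).

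To pin down $\gamma$ I would argue by planarity, using that a CAT(0) plane is a simply connected $2$-manifold. The three segments $[q,a_1]$, $[q,a_2]$ and $[a_1,a_2]$ form a Jordan triangle bounding a closed disk $T$, and the facts $\angle_q(a_i,b)=\pi$ place $b$ outside $T$, on the side of $q$ opposite to the interior sector of $T$. A geodesic issuing from $q$ to a point $c$ of the far side $[a_1,a_2]$ must enter $T$, for it cannot meet either $[q,a_1]$ or $[q,a_2]$ away from $q$ (two geodesics out of $q$ share only an initial segment); hence its initial direction $\gamma$ lies in the angular sector at $q$ subtended by the interior of $T$, which is precisely $J$ rather than the complementary arc through $\beta$. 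I expect this last planar step to be the main obstacle: it is the one point where the global topology of the plane, the uniqueness of geodesics, and the location of $b$ relative to $T$ must be combined, whereas the earlier reductions are formal.
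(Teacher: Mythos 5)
Your proposal follows the same route as the paper's proof and in fact goes further. The reduction chain is identical to the paper's: the concatenation is a geodesic iff $d(c,b)=d(c,q)+d(q,b)$; this holds once the Alexandrov angle satisfies $\angle_q(c,b)=\pi$, since Lemma \ref{Alexleqcomp} then forces the comparison angle $\overline{\angle}_{\overline{q}}\left(\overline{c},\overline{b}\right)$ to equal $\pi$, so the comparison triangle degenerates and side lengths add; and $\angle_q(c,b)=\pi$ holds once the directions of $\left[c,q\right]$ and $\left[q,b\right]$ are at distance at least $\pi$ in $Lk(q)$. Where you and the paper part ways is at that last claim: the paper's entire treatment of it is to call it ``the observation,'' with no argument given, whereas you attempt to prove it. Your circle argument is correct as far as it goes: $q$ is interior to each $\left[a_i,b\right]$, so each direction $\alpha_i$ lies in the closed arc $A=\left\{x:d_{Lk(q)}(x,\beta)\geq\pi\right\}$ of length $L-2\pi$, and the arc $J$ from $\alpha_1$ to $\alpha_2$ avoiding $\beta$ is contained in $A$.

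What remains --- showing that the direction $\gamma$ of $\left[q,c\right]$ lies on $J$ --- is, as you yourself flag, the crux, and your sketch of it still has genuine holes: (a) $\left[q,a_1\right]\cup\left[q,a_2\right]\cup\left[a_1,a_2\right]$ need not be a Jordan curve, since $\left[a_1,a_2\right]$ can share subsegments with the other two sides (bifurcation again), and the case $q\in\left[a_1,a_2\right]$ must be split off; (b) ``the angle facts place $b$ outside $T$'' conflates the location of $b$ with the direction $\beta$; what you actually need is that the interior sector of $T$ at $q$ is $J$ rather than the complementary arc $J'$, and this requires its own argument (one workable route: $J'$ contains the complement of $A$, so it has length at least $2\pi$, while a Gauss--Bonnet count for a geodesic triangle in a nonpositively curved polygonal surface forces every interior angle to be at most $\pi$); (c) even granting (a) and (b), a geodesic from $q$ to $c$ that avoids the two sides could a priori start in the exterior sector and reach $c$ from outside $T$, so ``it must enter $T$'' does not yet pin down its initial direction. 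Note that none of this can be patched by pure angle bookkeeping: comparison inequalities yield only bounds such as $\angle_q(a_1,c)+\angle_q(c,a_2)\leq\pi$, which is compatible with $\gamma$ lying just outside $A$ near $\alpha_1$. In fairness, the paper's printed proof is silent on all of (a)--(c), since it never proves the observation at all; so your proposal is at least as complete as the paper's argument, and it correctly isolates exactly where the real geometric content of this lemma lies.
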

We are now ready to show that the Brunn number for any subset of a CAT(0) plane is at most equal to 2.  The seemingly technical proof of Proposition \ref{CAT0Plane} below has a simple intuitive idea: in a CAT(0) plane, for any set $S$ the geodesics of $conv^2(S)$, having their endpoints in $conv^1(S)$, can be "swung" by sliding their endpoints thus sweeping out small convex neighborhoods around any point.  In view of our local-to-global technique in Proposition \ref{FinSubsets}, this establishes convexity of $conv^2(S)$.
\begin{pro}\label{CAT0Plane}("The Swinging Geodesics") Let $X$ be a CAT(0) plane.  Then the Brunn number of any subset $Y\subseteq X$ is at most 2.
\begin{proof}In view of Proposition \ref{FinSubsets}, we only need to show that for every finite subset $S\subseteq X$, $conv^2(S)$ is locally convex at every point $p\in conv^2(S)$.  Suppose to the contrary that $conv^2(S)$ is not locally convex at some $p\in conv^2(S)$.  Then, given any convex neighborhood $\mathcal{U}$ of $p$ in $X$, since $conv^2(S)$ is compact by Lemma \ref{Cpctconv}, we can find a geodesic $\gamma:\left[0,1\right]\rightarrow\mathcal{U}$ such that $\gamma(0),\gamma(1)\in conv^2(S)$ and $\gamma(t)\notin conv^2(S)$ for all $0<t<1$.  Since the image of $\gamma$ is not contained in $conv^2(S)$, at least one of $\gamma(0)$ or $\gamma(1)$ is not in $conv^1(S)$.  Without loss of generality, we may assume that $\gamma(0)=x_0\notin conv^1(S)$; note that we may have $x_0=p$.  Then, there exist points $x_1,x_2,y_1,y_2\in S$ such that $x_0\in\left[\left[x_1,y_1\right](t_1),\left[x_2,y_2\right](t_2)\right]$ for some $t_1,t_2\geq 0$.  Consider the 1-parameter family of geodesics $t'\mapsto\left[\left[x_1,y_1\right](t'),\left[x_2,y_2\right](t_2)\right]$.  If $x_0\in\left[\left[x_1,y_1\right](t'),\left[x_2,y_2\right](t_2)\right]$ for all $t'\geq t_1$ (or all $t'\leq t_1$), then $x_0$ lies on a geodesic having one endpoint, say $s$, in $S$.  In this case we consider the family $t'\mapsto\left[s,\left[x_2,y_2\right](t')\right]$.  If $x_0\in\left[s,\left[x_2,y_2\right](t')\right]$ for all $t'\geq t_2$ (or all $t'\leq t_2$), then $x_0\in conv^1(S)$, which is a contradiction.  Therefore, without loss of generality, we may assume that $x_0\notin\left[\left[x_1,y_1\right](t'),\left[x_2,y_2\right](t_2)\right]$ for some values of $t'$ greater than $t_1$ and also for some values of $t'$ less than $t_1$.  As $X$ has no free edges (i.e. edges which belong to only one polygon), $\gamma$ may be extended to geodesic, which by abuse of notation we shall also denote by $\gamma$, $\gamma:\left[-\epsilon,1\right]\rightarrow X$ such that this extension of $\gamma$ intersects $\left[\left[x_1,y_1\right](t_1),\left[x_2,y_2\right](t_2)\right]$ only at $x_0$.  Because $\gamma(t)\notin conv^2(S)$ for $t>0$ and because geodesics vary continuously with endpoints, we can find $t_0'>0$ and $t_0''>0$ such that $t_0'<t_1<t_0''$ and $-\epsilon<-\epsilon'<0$ such that $\left[\left[x_1,y_1\right](t_0'),\left[x_2,y_2\right](t_2)\right]$ and $\left[\left[x_1,y_1\right](t_0''),\left[x_2,y_2\right](t_2)\right]$ intersect at $\gamma(-\epsilon')$, and such that neither of these geodesic segments passes through $x_0$.  Because of uniqueness of geodesics in $X$, the segments $\left[\left[x_1,y_1\right](t_0'),\left[x_2,y_2\right](t_2)\right]$ and $\left[\left[x_1,y_1\right](t_0''),\left[x_2,y_2\right](t_2)\right]$ must coincide up to some point $q$ which is their point of bifurcation.  Then, Lemma \ref{GeodBroom} shows that for every $t$ between $t_0'$ and $t_0''$, the concatenation of the geodesic segments $\left[\left[x_2,y_2\right](t_2),q\right]$ and $\left[q,\left[x_1,y_1\right](t)\right]$ is a geodesic.  In particular, the concatenation $\left[\left[x_2,y_2\right](t_2),q\right]\cup\left[q,\left[x_1,y_1\right](t_1)\right]$ is the geodesic segment $\left[\left[x_1,y_1\right](t_1),\left[x_2,y_2\right](t_2)\right]$.  Since $x_0\in\left[\left[x_1,y_1\right](t_1),\left[x_2,y_2\right](t_2)\right]$, we must have that either $x_0\in\left[\left[x_2,y_2\right](t_2),q\right]$ or $x_0\in\left[q,\left[x_1,y_1\right](t_1)\right]$, since $q$ is their only point of intersection.  But if $x_0\in\left[\left[x_2,y_2\right](t_2),q\right]$, then $x_0\in\left[\left[x_1,y_1\right](t_0'),\left[x_2,y_2\right](t_2)\right]$, because the former is a subsegment of the latter, which contradicts the assumption that $x_0\notin\left[\left[x_1,y_1\right](t_0'),\left[x_2,y_2\right](t_2)\right]$.  Therefore, we must have that $x_0\in\left[q,\left[x_1,y_1\right](t_1)\right]$, and $x_0\neq q$.  Now, any extension of the geodesics segment $\left[\left[x_1,y_1\right](t_1),\left[x_2,y_2\right](t_2)\right]$ to a biinfinite geodesic divides the CAT(0) plane into two convex half-subspaces each homeomorphic to the upper half-plane, and the geodesic segments $\left[x_1,y_1\right](t_0'),\left[x_2,y_2\right](t_2)$ and $\left[x_1,y_1\right](t_0''),\left[x_2,y_2\right](t_2)$ having started to diverge at $q$, each enter a different convex half-space, which would make it impossible for them to intersect at $\gamma(-\epsilon')$.  This is a contradiction, and we have thus proved the claim. 
\end{proof}
\end{pro}

\section{\bf Free$\times$Free-abelian Groups}\label{FreeFreeab}
We shall need some facts about the isometries of CAT(0) spaces which the reader may recognize as having their origin in hyperbolic geometry.  Let $X$ be a CAT(0) space, and let $\gamma\in Isom(X)$ be an isometry of $X$.  We define the translation length of $\gamma$ to be $|\gamma|=\inf\left\{d(\gamma\cdot x,x):x\in X\right\}$ and we define $Min(\gamma)=\left\{x\in X: d(\gamma\cdot x,x)=|\gamma|\right\}$.  According to whether $|\gamma|$ is achieved or not and whether it is non-zero, $\gamma$ falls in one of three classes: \textit{elliptic, hyperbolic}, and \textit{parabolic}:\\
\begin{itemize}
	\item\label{elliptic} $\gamma\in Isom(X)$ is called elliptic if $\gamma$ has a fixed point;
	\item\label{elliptic} $\gamma\in Isom(X)$ is called hyperbolic if $|\gamma|$ is realized for some $x\in X$;
	\item\label{elliptic} $\gamma\in Isom(X)$ is called parabolic is $|\gamma|$ is not realized, i.e. $Min(\gamma)$ is empty.\\
\end{itemize}
An isometry $\gamma\in Isom(X)$ is hyperbolic if and only if there is a biinfinite geodesic $c:\mathbb{R}\rightarrow X$ on which $\gamma$ acts by translation, i.e. $\gamma\cdot c(t)=c(t+a)$ for some $a>0$, see Theorem 6.8 in Chapter II.6 in \cite{BH}.  The biinfinite geodesic $c$ is called an \textit{axis} for $\gamma$.  It is a well-known fact due to Tits that every non-trivial element $f$ of $F_m$ has a unique axis in $T_{2m}$, namely the set of vertices $v\in T_{2m}$ such that $d(v,f\cdot v)=|f|$ together with the edges that join them, see Lemmas 3.1 and 3.2 of \cite{HerHub}, also \cite{Serre} p. 63.  We are now ready to prove the following technical lemma which is an essential ingredient in the proof of Theorem \ref{Cls}:
\begin{lem}\label{ComVert}Suppose $f,g\in F_m$ do not have the same axis of translation in $T_{2m}$ and suppose that $p_0$ lies on the axis for $g$ in $T_{2m}$.  Then, there exists a vertex $v\in T_{2m}$, and a positive integer $k_0$, such that $\left[g^l\cdot p_0,fg^l\cdot p_0\right]$ passes through $v$ for every $l>k_0$.
\begin{proof}Let $a_g$ be the axis for $g$ passing through $p_0$.  Since $a_g$ and its translate $f\cdot a_g$ are convex, so is their intersection $f\cdot a_g\cap a_g$.  Therefore, $f\cdot a_g\cap a_g$ is isometric to either a geodesic line, a geodesic ray, i.e. an isometrically embedded copy of a half-line, or to a geodesic segment.  First, we show that the intersection must be, in fact, a (compact) geodesic segment.

If $f\cdot a_g\cap a_g$ is a geodesic line, then $f\cdot a_g\cap a_g=a_g$, and $f\cdot a_g=a_g$.  Now, since $f$ has no fixed points, $f$ must act by translation on $a_g$, hence $g$ and $f$ have a common axis contrary to assumption.

Now, suppose that $f\cdot a_g\cap a_g$ is isometric to a geodesic ray.  Then, there is $l\in\mathbb{Z}$ such that $fg^l\cdot p_0\in a_g$ and $fg^{l\pm 1}\cdot p_0\in a_g$.  The plus/minus sign refers to whether the powers of $g$ increase or decrease in the direction the geodesic ray which goes to infinity.  Since $d(fg^l\cdot p_0,fg^{l\pm 1}\cdot p_0)=|g|$, $g^{\pm 1}fg^l\cdot p_0=fg^{l\pm 1}\cdot p_0$.  By freeness of the action of $F_m$, we conclude that $g^{\pm 1}fg^l=fg^{l\pm 1}$, hence $g^{\pm 1}f=fg^{\pm 1}$.  It is easy to check that the four cases reduce to one of the following relations: either $gf=fg$, i.e $f$ and $g$ commute, or $f^2g=gf^2$ commute.  In either case, Proposition 6.2(2) in Chapter II.6 of \cite{BH} shows that $g$ leaves the axis of $f$ invariant.  As before, this contradicts the assumption that $f$ and $g$ do not have the same axis in $T_{2m}$.

Thus, we conclude that $f\cdot a_g\cap a_g$ is compact and we can therefore find a positive integer $k_0$ such that the geodesic rays $\vec{r_1}=\bigcup_{i\geq k_0}\left[g^i\cdot p_0,g^{i+1}\cdot p_0\right]\subset a_g$ and $\vec{r_2}=\bigcup_{i\geq k_0}\left[fg^i\cdot p_0,fg^{i+1}\cdot p_0\right]\subset f\cdot a_g$ are disjoint.  But then, it is obvious that $l=\vec{r_1}\cup\vec{r_2}\cup\left[g^{k_0}\cdot p_0,fg^{k_0}\cdot p_0\right]$ is a geodesic line and $g^l\cdot p_0\in\vec{r_1}$ and $fg^l\cdot p_0\in\vec{r_2}$, for all $l\geq k_0$.  It is also obvious that if $x\in\vec{r_1}$ and $y\in\vec{r_2}$, $\left[x,y\right]$ must pass through all the vertices on the segment $\left[g^{k_0}\cdot p_0,fg^{k_0}\cdot p_0\right]$.  Now, we can take $v$ to be any one of these vertices, and we are done.
\end{proof}
\end{lem}

Throughout the remainder of this section, $G$ will be the group $F_m\times\mathbb{Z}^n$ and $X$ will stand for the cartesian product of the regular $2m$-valent tree $T_{2m}$ and $\mathbb{R}^n$.  The action of $G$ on $X$ is the product action where $F_{m}$ acts as the group of deck transformations on the universal cover of the wedge of $m$ circles, $T_{2m}$, and $\mathbb{Z}^n$ acts by translation on $\mathbb{R}^n$.  We shall denote the projection maps to the tree factor and the Euclidean factor by $p:T_{2m}\times\mathbb{R}^n\rightarrow T_{2m}$ and $pr_{\mathbb{R}^n}:T_{2m}\times\mathbb{R}^n\rightarrow\mathbb{R}^n$ respectively.\\\\
\textbf{Remark}:  An important fact about the projection maps $p$ and $pr_{\mathbb{R}^n}$ is that they map geodesic segments to geodesic segments, see Proposition 5.3(3) in Chapter I.5 of \cite{BH}.\\\\
The following technical lemma is crucial for the remainder of this paper: 

\begin{lem}\label{1}Let $H=\left\langle f_1z_1,...,f_sz_s\right\rangle$, $f_i\in F_m$, $z_i\in\mathbb{Z}^n$ be a quasiconvex subgroup of $G$ with respect to the usual action of $G$ on $X$ such that not all of the $f_i$ have the same axis of translation in $T_{2m}$.  Then, there exist positive integers $k_1,...,k_s$ such that $H$ contains the subgroup $A=\left\langle z_1^{k_1},...,z_s^{k_s}\right\rangle$.
\end{lem}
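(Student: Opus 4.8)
The plan is to prove the equivalent statement that $K:=H\cap(\{1\}\times\mathbb{Z}^n)$ contains a positive multiple of each $z_i$; choosing $k_i$ to be such a multiple then gives $A=\langle z_1^{k_1},\dots,z_s^{k_s}\rangle\le K\le H$. Write each $g\in G$ as $g=(\phi(g),\zeta(g))$ with $\phi(g)\in F_m$ and $\zeta(g)\in\mathbb{Z}^n$, fix the basepoint $x_0=(o,0)$ with $o$ the base vertex of $T_{2m}$, and let $\nu$ be a quasiconvexity constant for $Hx_0$. Note that $K$ is a subgroup of $\mathbb{Z}^n$, hence a lattice in its real span $W:=K\otimes\mathbb{R}$. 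If $f_i=1$ then $g_i=(1,z_i)\in K$ and there is nothing to prove, so fix $i$ with $f_i\neq1$. Since $F_m$ acts freely on $T_{2m}$, such an $f_i$ is hyperbolic with a well-defined axis; and because not all of the $f$'s share one axis, some nontrivial $f_j$ must have $\mathrm{axis}(f_j)\neq\mathrm{axis}(f_i)$ (otherwise all nontrivial $f$'s would share $\mathrm{axis}(f_i)$), so I fix such a $j$.

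The heart of the argument is a single geometric claim exploiting quasiconvexity. For signs $\epsilon,\delta\in\{+,-\}$, write $\xi_i^{\epsilon}=\lim_N f_i^{\epsilon N}o$ for the corresponding end of $\mathrm{axis}(f_i)$, and suppose $\xi_i^{\epsilon}\neq\xi_j^{\delta}$. Then $(\xi_i^{\epsilon},\xi_j^{\delta})$ is a genuine bi-infinite geodesic, and I let $w$ be the median of $o,\xi_i^{\epsilon},\xi_j^{\delta}$, a single vertex at bounded distance from $o$, independent of $N$. For all large $N$ the tree geodesic $[f_i^{\epsilon N}o,\,f_j^{\delta N}o]$ passes through $w$, reaching it at tree-arclength $N\ell_i+O(1)$ from its first endpoint, where $\ell_i,\ell_j$ denote translation lengths. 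Evaluating the product geodesic $[g_i^{\epsilon N}x_0,\,g_j^{\delta N}x_0]$ in $T_{2m}\times\mathbb{R}^n$ at the instant its tree coordinate equals $w$, the $\mathbb{R}^n$-coordinate is the corresponding convex combination of $\epsilon N z_i$ and $\delta N z_j$, which I compute to be $N u+O(1)$ for the fixed vector $u=(\epsilon\ell_j z_i+\delta\ell_i z_j)/(\ell_i+\ell_j)$. Thus $(w,\,Nu+O(1))$ lies on a geodesic joining two points of $Hx_0$, so quasiconvexity yields $h_N\in H$ with $d_T(\phi(h_N)o,w)\le\nu$ and $\zeta(h_N)=Nu+O(1)$.

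Now $\phi(h_N)$ ranges over the finite set of elements of bounded tree-length, so by pigeonhole infinitely many $h_N$ share one value $\phi_0$; fixing one such index $N_0$, the elements $\kappa_N:=h_N h_{N_0}^{-1}$ lie in $K$ and satisfy $\zeta(\kappa_N)=(N-N_0)u+O(1)$ with $N\to\infty$. Since $K$ is a lattice in $W$ while these points remain within bounded distance of the unbounded ray $\mathbb{R}_{\ge0}u$, the direction $u$ must lie in $W$; as $u$ is rational, a positive multiple of $\epsilon\ell_j z_i+\delta\ell_i z_j$ lies in $K$. Applying this with two sign choices realizing distinct ends (at most one of the four combinations can fail, since two distinct lines in a tree share at most one end), I obtain positive multiples of both $\ell_j z_i+\ell_i z_j$ and $\ell_j z_i-\ell_i z_j$ in $K$; adding these gives a positive multiple of $z_i$ in $K$, completing the argument over all $i$.

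The step I expect to be the main obstacle is the geometric bookkeeping in the key claim: verifying that the median vertex $w$ is genuinely independent of $N$ and is crossed at tree-arclength $N\ell_i+O(1)$ with an error bounded uniformly in $N$, correctly converting the tree-arclength fraction into the $\mathbb{R}^n$-coordinate under the product (rather than tree) parametrization of the geodesic, and checking that when $\mathrm{axis}(f_i)$ and $\mathrm{axis}(f_j)$ share an endpoint one can still choose two sign combinations realizing the two directions $\ell_j z_i\pm\ell_i z_j$. By contrast, the pigeonhole extraction of elements of $K$ and the lattice argument forcing $u\in W$ are routine.
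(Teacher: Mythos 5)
Your proof is correct, and it rests on the same basic mechanism as the paper's: because $f_i$ and $f_j$ have distinct axes, geodesics joining suitable $H$-orbit points must cross a fixed vertical flat $\{w\}\times\mathbb{R}^n$; quasiconvexity plants orbit points of $H$ at bounded distance from those crossings; and a pigeonhole/discreteness argument then extracts pure translations lying in $H$. The difference is in the choice of geodesics, and it changes the amount of work needed. The paper joins $(f_iz_i)^l x_0$ to $(f_jz_j)(f_iz_i)^l x_0$: since these endpoints differ by the single fixed element $f_jz_j$, the Euclidean coordinate of the crossing point $y_l$ is automatically within $\left\|z_j\right\|$ of $lz_i$, so the quasiconvexity elements $h_l$ satisfy $d(h_lx_0,z_i^lx_0)\leq\tau$ for a constant $\tau$, and properness of the $G$-action forces $h_l^{-1}z_i^l$ to repeat, giving $z_i^{l-k}\in H$ directly --- no translation lengths, no convex-combination computation, no recombination of directions. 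Your symmetric choice $g_i^{\epsilon N}x_0$, $g_j^{\delta N}x_0$ makes the crossing coordinate the mixed direction $Nu+O(1)$ with $u=(\epsilon\ell_jz_i+\delta\ell_iz_j)/(\ell_i+\ell_j)$, so you must additionally (i) do the product-parametrization bookkeeping (which you do correctly: the crossing occurs at tree-arclength $N\ell_i+O(1)$ out of $N(\ell_i+\ell_j)+O(1)$, giving the stated convex combination), (ii) pigeonhole only the tree coordinate to land in $K=H\cap(\{1\}\times\mathbb{Z}^n)$, (iii) use the closedness of the span $W$ plus rationality of $u$ to get an integer multiple of $\ell_jz_i+\delta\epsilon\ell_iz_j$ in $K$, and (iv) play two sign choices against each other (legitimate, since distinct lines in a tree share at most one end) to isolate a positive multiple of $z_i$. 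Each of these steps checks out, so your argument is a valid, if longer, alternative; what the paper's asymmetric choice of endpoints buys is precisely that the unwanted $z_j$-component of the crossing stays $O(1)$ rather than growing linearly, collapsing your steps (iii)--(iv). One last remark: your reading of the hypothesis --- that the \emph{nontrivial} free parts do not all share one axis, with trivial $f_i$ handled separately since then $z_i\in K$ outright --- is the correct interpretation and is exactly what the paper's proof implicitly assumes when it picks, for each $i$, some $j$ with a different axis.
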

\begin{proof}
Let $1\leq i\leq s$, let $j$ be such that $f_i$ and $f_j$ have different axes of translation, and  let $l$ be a positive integer.  Find an axis of translation for $f_iz_i$ whose projection to the Euclidean factor passes through $0\in\mathbb{R}^n$.  This can always be done by translating the Euclidean component of any given axis for $f_iz_i$.  Let $x_0$ be a point on the chosen axis of translation for $f_iz_i$ such that $pr_{\mathbb{R}^n}(x_0)=0$.  Consider the sequences of points $(f_iz_i)^lx_0$ and $(f_jz_j)(f_iz_i)^lx_0$.  Because $f_i$ and $f_j$ have different axes, an application of Lemma \ref{ComVert} to the projection of the geodesic segment $\left[(f_iz_i)^lx_0,(f_jz_j)(f_iz_i)^lx_0\right]$ to $T_{2m}$, shows that there is a vertex $v$ in $T_{2m}$ such that for all large enough $l$, the geodesic segment $\left[(f_iz_i)^lx_0,(f_jz_j)(f_iz_i)^lx_0\right]$ passes through the flat $\left\{v\right\}\times\mathbb{R}^n$.  Let $y_l$ denote the point of intersection of $\left\{v\right\}\times\mathbb{R}^n$ and $\left[(f_iz_i)^lx_0,(f_jz_j)(f_iz_i)^lx_0\right]$.  The orthogonal projection of this geodesic segment to the flat $\left\{v\right\}\times\mathbb{R}^n\cong\mathbb{R}^n$ is the geodesic segment between $z_i^l$ and $z_i^l+z_j$.  Since the geodesic $\left[(f_iz_i)^lx_0,(f_jz_j)(f_iz_i)^lx_0\right]$ intersects its orthogonal projection in the point $y_l$, we see that $y_l\in\left[z_i^l,z_i^l+z_j\right]\subset \left\{v\right\}\times\mathbb{R}^n$, hence $d(y_l,(v,z_i^l))\leq \left\|z_j\right\|$.  The orbit $Hx_0$ is quasiconvex, hence there is $\nu>0$ and $h_l\in H$ such that $d(h_lx_0,y_l)<\nu$.  Then, $d(h_lx_0,z_i^lx_0)\leq d(h_lx_0,y_l)+d(y_l,z_i^l(v,0))+d(z_i^l(v,0),z_i^lx_0)<\nu+\left\|z_j\right\|+d(x_0,(v,0))$.  If $\tau=\nu+\left\|z_j\right\|+d(x_0,(v,0))$, then $B_{\tau}(h_lx_0)\cap B_{\tau}(z_i^lx_0)\neq\emptyset$, or $B_{\tau}(h_l^{-1}z_i^lx_0)\cap B_{\tau}(x_0)\neq\emptyset$, for all $l$.  Because the action of $G$ is proper, $h_l^{-1}z_i^l=g\in G$ for infinitely many values of $l$.  Then, for some $k$,$l$ we have $z_i^{l-k}=h_lh_k^{-1}\in H$.  Setting $k_i=l-k$, we obtain $z_i^{k_i}\in H$.
\end{proof}
Let $V$ denote the real span of the vectors $z_1^{k_1},...,z_k^{k_s}$.  We then, have the following: 
\begin{lem}\label{2}
With the same notation as in Lemma \ref{1}, the convex hull of $Hx_0$ equals $conv(p(Hx_0))\times V$, for any $x_0\in\mathbb{R}^n\times T_{2m}$. 
\end{lem}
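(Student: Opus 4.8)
The plan is to prove the two inclusions $conv(Hx_0)\subseteq conv(p(Hx_0))\times V$ and $conv(p(Hx_0))\times V\subseteq conv(Hx_0)$ separately. The structural fact driving everything is that $X=T_{2m}\times\mathbb{R}^n$ carries the $\ell^2$-product metric of two uniquely geodesic spaces, so the geodesic from $(a,b)$ to $(a',b')$ is $t\mapsto(\alpha(t),\beta(t))$, where $\alpha,\beta$ are the constant-speed geodesics joining the respective coordinates. Two consequences I will use repeatedly are that a product $C\times W$ of convex sets is convex, and that the geodesic between two points sharing the same Euclidean coordinate $v$ is contained in the slice over $v$.

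For the inclusion $conv(Hx_0)\subseteq conv(p(Hx_0))\times V$, recall from Lemma~\ref{1} that we may take the basepoint with $pr_{\mathbb{R}^n}(x_0)=0$. Since $pr_{\mathbb{R}^n}\colon G\to\mathbb{Z}^n$ is a homomorphism carrying each generator $f_iz_i$ to $z_i$, every $h\in H$ has $pr_{\mathbb{R}^n}(hx_0)=pr_{\mathbb{R}^n}(h)\in\langle z_1,\dots,z_s\rangle\subseteq V$, while $p(hx_0)\in p(Hx_0)$. Thus $Hx_0\subseteq conv(p(Hx_0))\times V$, and as the right-hand side is a product of convex sets, hence convex, the inclusion of convex hulls follows.

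For the reverse inclusion I would first establish the \emph{slice statement}: $\{q\}\times V\subseteq conv(Hx_0)$ for every $q\in p(Hx_0)$. Fix such a $q$ and an $h\in H$ with $p(hx_0)=q$, so $hx_0=(q,w)$ with $w\in V$. By Lemma~\ref{1}, $H\supseteq A=\langle z_1^{k_1},\dots,z_s^{k_s}\rangle$, and every element of $A$ has trivial $F_m$-component, so acts by translating only the Euclidean coordinate through the subgroup $pr_{\mathbb{R}^n}(A)\le\mathbb{Z}^n$, whose real span is $V$. Consequently $A\cdot hx_0=\{q\}\times\bigl(w+pr_{\mathbb{R}^n}(A)\bigr)\subseteq Hx_0$; since $pr_{\mathbb{R}^n}(A)$ contains a full-rank lattice of $V$, its convex hull is all of $V$, and these points lie in the convex slice $\{q\}\times V$, giving $\{q\}\times V\subseteq conv(Hx_0)$. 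Hence $p(Hx_0)\times V\subseteq conv(Hx_0)$.

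To finish I would fill in $conv(p(Hx_0))\times V$. By Lemma~\ref{SeqConv}, any $c\in conv(p(Hx_0))$ lies in $conv^i(p(Hx_0))$ for some $i$, and I will show by induction on $i$ that $\{c\}\times V\subseteq conv(Hx_0)$, the base case $i=0$ being the slice statement. For the step, write $c=[q_1,q_2](t)$ with $q_1,q_2\in conv^{i-1}(p(Hx_0))$; for any $v\in V$ the inductive hypothesis puts $(q_1,v),(q_2,v)\in conv(Hx_0)$, and the product-geodesic fact identifies the segment joining them with $t\mapsto([q_1,q_2](t),v)$, which passes through $(c,v)$ and stays in the convex set $conv(Hx_0)$. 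The main obstacle is precisely this reverse inclusion, and within it the slice step: the key realization is that the containment $A\le H$ supplied by Lemma~\ref{1} deposits an entire full-rank lattice of Euclidean translates over each point of $p(Hx_0)$, so that taking convex hulls fiberwise already recovers all of $V$; the product structure of geodesics then spreads this over $conv(p(Hx_0))$ routinely. I would also verify that $pr_{\mathbb{R}^n}(A)$ genuinely spans $V$ (the $z_i^{k_i}$ are positive multiples of the $z_i$) and that the convex hull of a subgroup spanning $V$ is all of $V$, which reduces to the standard fact that a full-rank lattice has convex hull equal to the ambient subspace.
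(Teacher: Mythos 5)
Your proof is correct, but it is organized differently from the paper's. The paper first proves that the projections $p$ and $pr_{\mathbb{R}^n}$ commute with the operation of taking convex hulls (via the identity $p(conv^1(S))=conv^1(p(S))$ together with Lemma \ref{SeqConv}); the inclusion $\subseteq$ then follows by projecting, and for $\supseteq$ the paper uses the commutation identity to produce, over any tree-coordinate $c\in conv(p(Hx_0))$, some point $y\in conv(Hx_0)$ with $p(y)=c$, and then translates $y$ by $V$, asserting that $conv(Hx_0)$ is stable under the full group of translations by $V$. You avoid the commutation lemma entirely: your $\subseteq$ is the more elementary observation that the orbit sits inside the convex product set $conv(p(Hx_0))\times V$, and your $\supseteq$ proceeds fiberwise --- first the slice statement $\{q\}\times V\subseteq conv(Hx_0)$ over orbit points $q\in p(Hx_0)$, using the $A$-orbit lattice and the fact that the hull of a full-rank lattice is the whole subspace, then an induction along the sets $conv^i(p(Hx_0))$ to spread the slices over all of $conv(p(Hx_0))$ (in effect re-proving, in fiberwise-strengthened form, the half of the commutation identity that the paper needs). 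Each route has something to recommend it: the paper's commutation identity is slicker and gets reused later (in Lemma \ref{3} it is invoked to decompose $p(conv(Hx_0))$ into geodesics), whereas your argument is self-contained and, notably, supplies an explicit justification for the one step the paper only asserts, namely why invariance of $conv(Hx_0)$ under the lattice $A$ upgrades to invariance under all of $V$ --- your hull-of-a-lattice observation is exactly the missing reason.
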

\begin{proof}First, we note that the projection maps $p,pr_{\mathbb{R}^n}$ commute with the operation of forming the convex hull.  That is, $p(conv(Hx_0))=conv(p(Hx_0))$, and similarly for $pr_{\mathbb{R}^n}$.  Let us show this for the projection map $p$.  We begin by making the observation that $p(conv^1(S))=conv^1(p(S))$ for any set $S$, since $p$ maps the geodesic segment connecting two points to the geodesic segment connecting their images.  Therefore, we have  $p(conv(Hx_0))=p\left(\bigcup_i conv^i(Hx_0)\right)=\bigcup_i p(conv^i(Hx_0))=\bigcup_i conv^i(p(Hx_0))=conv(p(Hx_0))$.

Now, we proceed with the proof of the lemma.

'$\subseteq$':  Without loss of generality, we may assume that $pr_{\mathbb{R}^n}(x_0)=0$.  Clearly, $conv(Hx_0)\subseteq p(conv(Hx_0))\times pr_{\mathbb{R}^n}(conv(Hx_0))$, which after commuting the projection maps past $conv$ gives us the desired inclusion.

'$\supseteq$':  Let $x\in conv(p(Hx_0))\times V$.  Let $y\in conv(Hx_0)$ be such that $p(y)=p(x)$.  Note that because $H$ contains powers of the Euclidean translations $z_1,...,z_k$, the projection of the convex hull of the orbit $Hx_0$ to the Euclidean factor will equal $V$.  Also, $conv(Hx_0)\supseteq V\cdot y$, as $conv(Hx_0)$ is stable under the action of $V$ by translations on the second factor.  
Hence, we can write $x=w\cdot y$, for some $w\in V$, so that $x\in conv(Hx_0)$.
\end{proof}
\begin{lem}\label{3}Let $H$ be as in Lemma \ref{1}.  Then, the group $H$ acts cocompactly on its convex hull.  In particular, the Brunn number of the orbit $Hx_0$ is bounded above by $1+\dim(V)$.
\end{lem}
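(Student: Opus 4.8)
\textbf{Proof strategy for Lemma \ref{3}.}
The plan is to use Lemma \ref{2} to reduce the question of cocompactness on $conv(Hx_0)$ to a question about the tree factor alone, where the structure is much more tractable. By Lemma \ref{2}, we have $conv(Hx_0)=conv(p(Hx_0))\times V$, so the action of $H$ on its convex hull splits as a product, and $V$ is a finite-dimensional Euclidean space on which the Euclidean part of $H$ acts. The key observation is that $conv(p(Hx_0))$ is a subtree $T'$ of $T_{2m}$: in a tree, the union of geodesics between points of a set is already convex, so the convex hull of $p(Hx_0)$ is the smallest subtree containing the orbit's projection, and it is $p(H)$-invariant. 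I would then argue that $p(H)$ acts cocompactly on $T'$, since $p(H)$ is a finitely generated subgroup of $F_m$ whose orbit is $\nu$-dense in $T'$ in an appropriate sense; a finitely generated free group acts cocompactly on the minimal subtree spanned by its orbit.

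Carrying this out, first I would record that $conv(Hx_0)=T'\times V$ where $T'=conv(p(Hx_0))$ and observe that the $H$-action respects this product decomposition, with $H$ acting on $T'$ through $p(H)\le F_m$ and on $V$ through the translations $z_i^{k_i}$ (and the full Euclidean parts of the generators). Since by Lemma \ref{1} the subgroup $A=\langle z_1^{k_1},\ldots,z_s^{k_s}\rangle$ lies in $H$ and spans $V$, the lattice $A$ acts cocompactly on $V\cong\mathbb{R}^{\dim V}$ by translations. It then remains to show that $p(H)$ acts cocompactly on the subtree $T'$. For this I would use that $T'$ is the minimal $p(H)$-invariant subtree (equivalently the convex hull of a single orbit) and that $p(H)$ is finitely generated; a finitely generated group acting on a tree with minimal invariant subtree equal to the hull of an orbit acts cocompactly on that subtree, since the quotient is a finite graph. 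Combining the two cocompact factor actions gives a cocompact action of $H$ on the product $T'\times V=conv(Hx_0)$.

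For the bound on the Brunn number, I would invoke Corollary \ref{BrunnRn} applied to $V$ (giving Brunn number $\le\dim V$ in the Euclidean factor) together with the fact that the tree factor $T'$ is already convex, so $conv^1(p(Hx_0))=conv(p(Hx_0))$ contributes Brunn number $1$. Since the convex hull decomposes as a product and the convexification can be carried out factorwise---using $p(conv^i(S))=conv^i(p(S))$ and the analogous statement for the Euclidean projection established in Lemma \ref{2}---the Brunn number of the product is at most the maximum, or more precisely the sum accounting for the one tree step and the Euclidean steps, yielding the stated bound $1+\dim(V)$.

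\textbf{Main obstacle.} The step I expect to be most delicate is verifying cleanly that $p(H)$ acts cocompactly on the subtree $T'=conv(p(Hx_0))$, and in particular pinning down that $T'$ is genuinely the minimal invariant subtree rather than something larger, so that the quotient $p(H)\backslash T'$ is compact. The subtlety is that $p(H)$ need not be all of $F_m$ and the projection $p$ may collapse part of the $H$-structure; one must use quasiconvexity of $H$ in $G$ to control the orbit in the tree and ensure no infinite rays of $T'$ escape the $p(H)$-orbit neighborhood. Relatedly, one must be careful that the product decomposition of the action is honest---that the Euclidean displacement contributed by elements of $H$ genuinely lands in $V$ and not in a larger subspace---which is exactly what Lemmas \ref{1} and \ref{2} were set up to guarantee.
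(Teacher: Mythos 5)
Your cocompactness argument is correct and genuinely different from the paper's. The paper's proof of this lemma establishes only the Brunn bound: it decomposes $conv(p(Hx_0))$ into geodesics $\gamma$ whose points lie between two orbit projections, writes $conv(Hx_0)=\bigcup_{\gamma}\gamma\times V$ as a union of flats isometric to $\mathbb{R}^{1+\dim(V)}$, and applies Corollary \ref{BrunnRn} to lattices of orbit points inside each flat; cocompactness is then a consequence of the finite Brunn number combined with quasiconvexity via Lemma \ref{GrowthCont}, which traps $conv(Hx_0)=conv^{1+\dim(V)}(Hx_0)$ in the bounded neighborhood $N_{(1+\dim(V))\nu}(Hx_0)$ of the orbit. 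You instead prove cocompactness directly from the product decomposition of Lemma \ref{2}: $A=\langle z_1^{k_1},\ldots,z_s^{k_s}\rangle\leq H$ is a lattice in $V$, and $p(H)$ is finitely generated, hence acts cocompactly on $T'=conv(p(Hx_0))$ (which, as you rightly suspect, is generally \emph{not} the minimal subtree --- but it differs from it only by one $p(H)$-orbit of compact ``hairs'' running out to the orbit points, so cocompactness survives). One step should be made explicit: cocompactness of the two factor actions alone is not enough --- the diagonal $\mathbb{Z}$ in $\mathbb{R}\times\mathbb{R}$ has image acting cocompactly on each factor, yet is not cocompact on the plane. What saves you is precisely that $A$ lies in $H$ and acts trivially on the tree coordinate, so one can first move the tree coordinate into a compact set by some $h\in H$ and then correct the Euclidean coordinate by an element of $A$ without disturbing the tree coordinate. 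Your route is more elementary and does not consume quasiconvexity a second time (it was already spent in Lemma \ref{1} to produce $A$); the paper's route makes the Brunn bound and the quasiconvexity machinery do that work.

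By contrast, your justification of the bound $1+\dim(V)$ on the Brunn number has a genuine gap: the orbit $Hx_0$ is \emph{not} a product set inside $T'\times V$, so convexification cannot be ``carried out factorwise,'' and the projection identities $p(conv^i(S))=conv^i(p(S))$ cannot substitute for this: they control the projections of $conv^i(Hx_0)$ from above, but say nothing about whether $conv^i(Hx_0)$ fills up the product of those projections. (The four vertices of a unit square in $\mathbb{R}^2$ illustrate the point: after one convexification step both coordinate projections are full segments, yet the set itself is only the four edges plus the two diagonals; this example also refutes the ``at most the maximum'' alternative you hedge with --- for honest product sets the sum is the correct statement.) The repair is exactly the device the paper uses: inside the orbit sit genuine product sets, namely $Ahx_0=\{p(hx_0)\}\times(\text{a translated lattice spanning }V)$ for $h\in H$ (the translation lies in $V$ once one normalizes $pr_{\mathbb{R}^n}(x_0)=0$ as in Lemma \ref{2}). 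By Corollary \ref{BrunnRn}, $\dim(V)$ convexification steps fill each of these to the vertical flat $\{p(hx_0)\}\times V$, giving $conv^{\dim(V)}(Hx_0)\supseteq p(Hx_0)\times V$; one further step then fills $T'\times V$ horizontally, because every point of the subtree $T'$ lies on a geodesic between two points of $p(Hx_0)$. This yields $conv^{1+\dim(V)}(Hx_0)=conv(Hx_0)$ and completes your outline.
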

\begin{proof}First, we show that we can write $p(conv(Hx_0))=conv(p(Hx_0))$ as a union of biinfinite geodesic rays $\gamma$, such that any point on $\gamma$ lies between two points in $p(Hx_0)$.  Note that $T=conv(p(Hx_0))\subseteq T_{2m}$ is itself a tree since it is a connected subset of $T_{2m}$, and also that $conv(p(Hx_0))=conv^1(p(Hx_0))$.  Further, at least one vertex in $T$ has more than one edge attached to it, otherwise $T=conv(p(Hx_0))$ would not be connected.  Because $H$ acts transitively on the vertices of $T$, every vertex of $T$ has this property, hence $\partial T=\emptyset$.  Now, we show how to construct the biinfinite geodesics $\gamma$: let $x\in T$.  Since $x\in conv^1(p(Hx_0))$, we can find $h_1,h_2\in H$ such that $x\in \left[p(h_1\cdot x_0),p(h_2\cdot x_0)\right]$.  Next, since $\partial T=\emptyset$, we can extend $\left[p(h_1\cdot x_0),p(h_2\cdot x_0)\right]$ on both ends by at least one edge.  This new geodesic segment contains $\left[p(h_1\cdot x_0),p(h_2\cdot x_0)\right]$ in its interior and is still contained in $T$.  Further, its endpoints lie in $conv^1(p(Hx_0))$, and therefore we can repeat the process and find new geodesic segments with endpoints in $p(Hx_0)$ which contain them.  Repeating this process, we find an ascending chain of geodesic segments with endpoints in $p(Hx_0)$ which contain the point $x$.  The union of these geodesic segments is a biinfinite geodesic $\gamma$ which passes through $x$, and which has the property that any point on $\gamma$ lies between two points in $p(Hx_0)$.  Since $x$ was an arbitrary point in $T$, we conclude that $T$ can indeed be written as a union of all biinfinite geodesics $\gamma$ such that any point on $\gamma$ lies between two points in $p(Hx_0)$.  

Now, $conv(Hx_0)=\bigcup_{\gamma} \gamma\times V$, and $\gamma\times V\cong\mathbb{R}^{1+\dim(V)}$, with $\gamma$ as described above.  Note that $\gamma\times V$ contains the lattices\\ $p(hx_0)\times\mathbb{Z}-span\left\langle z_1^{k_1},...,z_s^{k_s}\right\rangle$, where $h\in H$.  Since any point on $\gamma$ lies between two points $p(h_1),p(h_2)\in p(Hx_0)$, the convex hull of these lattices is all of $\gamma\times V$, and by Corollary \ref{BrunnRn},\\ $conv^{1+\dim(V)}\left(\bigcup (p(hx_0)\times\mathbb{Z}-span\left\langle z_1^{k_1},...,z_s^{k_s}\right\rangle)\right)=\gamma\times V$, where the union on the left-hand side is taken over all $h\in H$ such that $p(hx_0)\in\gamma$.  Finally,\\
$conv^{1+\dim(V)}(Hx_0)\supseteq conv^{1+\dim(V)}\left(\bigcup_{h\in H} (p(hx_0)\times\mathbb{Z}-span\left\langle z_1^{k_1},...,z_s^{k_s}\right\rangle)\right)\supseteq\bigcup\gamma\times V=\left(\bigcup\gamma\right)\times V=T\times V$, where the last two unions are taken over the biinfinite geodesics $\gamma$ which have the property that any point on $\gamma$ lies between the projections to $T_{2m}$ of two points of $Hx_0$.  But, by Lemma \ref{2}, this last expression is precisely equal to $conv(Hx_0)$, thus proving the main claim of the lemma.
\end{proof}
Combining Lemmas \ref{1}-\ref{3}, we obtain:
\begin{thm}\label{T1} Any subgroup of $F_m\times\mathbb{Z}^n$ which is quasiconvex with respect to the usual action of $F_m\times\mathbb{Z}^n$ on $T_{2m}\times\mathbb{R}^n$ acts cocompactly on the convex hull of any of its orbits.
\end{thm}
\begin{proof}Lemmas \ref{1}-\ref{3} take care of the case when for each $i$ there is $j$ such that such that $f_i$ and $f_j$ have different axes of translation.  If $H=\left\langle f^{k_1}z_1,...,f^{k_s}z_s\right\rangle$, $f\in F_{m}$, then the orbit $Hx_0$ is contained in a single flat $a_f\times V$ isometric to $\mathbb{R}^{1+\dim(V)}$, where $a_f$ is a common axis for all $f^{k_i}$, and $x_0$ is on a common axis for all the $f_iz_i$.  Hence, $conv(Hx_0)=conv^{1+\dim(V)}(Hx_0)$, which shows cocompactness of the action of $H$.  In either of the cases $H=\left\langle f_1,...,f_s\right\rangle$ or $H=\left\langle z_1,...,z_s\right\rangle$, the conclusion is again trivially true.  In the former case $conv(Hx_0)=conv^1(Hx_0)$, while in the latter $conv(Hx_0)=conv^s(Hx_0)$.  Now, cocompactness immediately follows from Lemma \ref{QuasCCoco}.
\end{proof}
In the course of proving the theorem, we have the essential ingredients for the following interesting result:
\begin{cor}\label{C1}If $H$ is a subgroup of $F_m\times\mathbb{Z}^n$ which is quasiconvex with respect to the usual action of $F_m\times\mathbb{Z}^n$ on $T_{2m}\times\mathbb{R}^n$, and whose image in $F_m$ under the natural projection $F_m\times\mathbb{Z}^n\rightarrow F_m$ has rank greater than $1$, then $H$ is virtually of the form $A\times B$, where $A\leq F_m$ is finitely generated and $B\leq\mathbb{Z}^n$.
\begin{proof}Since the rank of the projection of $H$ is greater than $1$, given any $g=fz\in H$, we can find an element $f'z'\in H$, such that $f$ and $f'$ have different axes.  Therefore, the last line of the proof of Lemma \ref{1} shows that for $g=fz\in H$, there exists $t$ such that $z^t\in H$, and hence $f^t\in H$.  Let $A=F_m\cap H$ and $B=\mathbb{Z}^n\cap H$.  Then, $g^t\in AB$.  On the other hand, $\left[H,H\right]\subseteq A$, and also $AB$ is normal in $H$.  Hence, we see that $H/AB$ is a finitely generated, torsion, abelian group, and is therefore finite, thus proving the claim.
\end{proof}
\end{cor}
On the other hand, it is easy to show that any subgroup which is virtually of the form $A\times B\subseteq F_m\times\mathbb{Z}^n$, where $A\subseteq F_m$ is finitely generated, and $B\subseteq\mathbb{Z}^n$, is quasiconvex:
\begin{pro}\label{Cls}Let $H$ be a subgroup of $G=F_m\times\mathbb{Z}^n$.  If $H$ is virtually of the form $A\times B$, where $A\subseteq F_m$ is finitely generated, and $B\subseteq\mathbb{Z}^n$, then $H$ is quasiconvex with respect to the standard action of $G$ on $T_{2m}\times\mathbb{R}^n$.
\begin{proof}First, we prove that any subgroup of the form $A\times B$, with $A$ and $B$ as above, is quasiconvex.  Let $A=\left\langle f_1,...,f_s\right\rangle$ and $B=\left\langle z_1,...,z_t\right\rangle$, and let $p_1\in T_{2m}$, $p_2\in\mathbb{R}^n$.  We shall show that $conv^1(A\times B\cdot(p_1,p_2))\subseteq\mathcal{N}_{\nu}(A\times B\cdot (p_1,p_2))$.  First, we observe that for any $x\in T_{2m}$ and $y\in\mathbb{R}^n$, we have $\mathcal{N}_{\nu_1}(x)\times\mathcal{N}_{\nu_2}(y)\subseteq\mathcal{N}_{\sqrt{\nu_1^2+\nu_2^2}}((x,y))$: suppose that $x'\in\mathcal{N}_{\nu_1}(x)$, and $y'\in\mathcal{N}_{\nu_2}(y)$.  Then, $d_{T_{2m}}(x,x')<\nu_1$ and $d_{\mathbb{R}^n}(y,y')<\nu_2$, hence $d_{T_{2m}\times\mathbb{R}^n}((x,y),(x',y'))=\sqrt{(d_{T_{2m}}(x,x'))^2+(d_{\mathbb{R}^n}(y,y'))^2}<\sqrt{\nu_1^2+\nu_2^2}$, and $(x',y')\in\mathcal{N}_{\sqrt{\nu_1^2+\nu_2^2}}((x,y))$.

Again, without loss of generality we may assume that $T_{2m}$ has been metrized so that every edge has length equal to 1.  Now, since $A\leq F_m$ is finitely generated, $A$ is $\nu_1$-quasiconvex, where we can take $\nu_1$ to be the largest of the word lengths of the $f_i$'s.  On the other hand, every subgroup of $\mathbb{Z}^n$ is $\nu_2$-quasiconvex for a large enough $\nu_2$.  Now, suppose that $c:\left[0,1\right]\rightarrow T_{2m}\times\mathbb{R}^n$ is a linearly parametrized geodesic connecting the points $(a_1\cdot p_1,b_1\cdot p_2)$ and $(a_2\cdot p_1,b_2\cdot p_2)$ in $A\times B\cdot(p_1,p_2)$.  By Proposition 5.3(3) in Chapter I.5 of \cite{BH}, $p\circ c$ and $pr_{\mathbb{R}^n}\circ c$ are both linearly prametrized geodesics which connect $a_1\cdot p_1$ with $a_2\cdot p_1$, and $b_1\cdot p_2$ with $b_2\cdot p_2$, respectively.  Since $A$ is quasiconvex in $F_m$ and $B$ is quasiconvex in $\mathbb{Z}^n$, the image of $p\circ c$ is contained in $\mathcal{N}_{\nu_1}(A\cdot p_1)$, and the image of $pr_{\mathbb{R}^n}\circ c$ is contained in $\mathcal{N}_{\nu_2}(B\cdot p_2)$.  Hence, the image of $c$ is contained in $\mathcal{N}_{\nu_1}(A\cdot p_1)\times\mathcal{N}_{\nu_2}(B\cdot p_2)\subseteq\mathcal{N}_{\sqrt{\nu_1^2+\nu_2^2}}(A\times B\cdot (p_1,p_2))$, showing that $A\times B$ is quasiconvex in $F_m\times\mathbb{Z}^n$.

Now, suppose that $H$ is a finite index subgroup of $A\times B$.  As $A\times B$ is quasiconvex, Theorem \ref{T1} shows that the quotient of $conv(A\times B\cdot x_0)$ by $A\times B$ is compact, for any $x_0\in T_{2m}\times\mathbb{R}^n$.  However, $H$ acts on $conv(A\times B\cdot x_0)$, and $conv(A\times B\cdot x_0)/H$ is a finite cover of $conv(A\times B\cdot x_0)/A\times B$.  Therefore, $conv(A\times B\cdot x_0)/H$ is also compact, and we conclude that for some $r>0$, $\mathcal{N}_r(H x_0)\supseteq conv(A\times B\cdot x_0)$.  But $conv^1(H x_0)\subseteq conv(A\times B\cdot x_0)$, which shows that $H$ is quasiconvex, as required. 
\end{proof}
\end{pro}
Combining Corollary \ref{C1} and Proposition \ref{Cls}, we obtain:
\begin{thm}\label{T2} If $H$ is a subgroup of $F_m\times\mathbb{Z}^n$ whose image under the natural projection $F_m\times\mathbb{Z}^n\rightarrow F_m$ has rank greater than $1$, then $H$ is quasiconvex with respect to the usual action of $F_m\times\mathbb{Z}^n$ on $T_{2m}\times\mathbb{R}^n$ if and only if $H$ is virtually of the form $A\times B$, where $A\leq F_m$ is finitely generated and $B\leq\mathbb{Z}^n$.
\end{thm}
Before turning to applications of our results, we would like to mention that bounding Brunn numbers of arbitrary subsets of $T_{2m}\times\mathbb{R}^n$, not just those arising as group orbits of quasiconvex subgroups, is not easy.  In fact, the author actually believes that finding such a bound may not even be possible.
\section{\bf Applications}
We conclude this paper with an application of Theorem \ref{T2} to computational group theory.  In a very recent paper \cite{VentDel}, J. Delgado and E. Ventura have shown that the Finite Index Problem for $F_m\times\mathbb{Z}^n$ is solvable.  The Finite Index Problem for a group $G$ is the following: \textit{Given a finite set of elements $\left\{w_1,...,w_s\right\}$ in $G$, decide whether the subgroup $H=\left\langle w_1,...,w_s\right\rangle$ is of finite index in $G$, and if so, compute the index and a system of right (or left) coset representatives for $H$.}  One of the results in \cite{VentDel} is:
\begin{thm}(\textbf{Delgado, Ventura} \cite{VentDel}) The Finite Index Problem for $F_m\times\mathbb{Z}^n$ is solvable.
\begin{proof}See Theorem 3.4 in \cite{VentDel}.
\end{proof}
\end{thm}
Combining this result with the characterization of quasiconvex subgroups of $F_m\times\mathbb{Z}^n$ given by our theorem \ref{Cls}, Delgado and Ventura establish the existence of an algorithm which decides whether a subgroup of $F_m\times\mathbb{Z}^n$ is quasiconvex or not:
\begin{cor}(\textbf{Delgado, Ventura} \cite{VentDel}) There exists an algorithm which, given a finite list $w_1,...,w_s$ of elements in $F_m\times\mathbb{Z}^n$, decides whether the subgroup $H=\left\langle w_1,...,w_s\right\rangle$ is quasiconvex or not.
\begin{proof}See Corollary 3.7 in \cite{VentDel}.
\end{proof}
\end{cor}

\bibliographystyle{plain}

\end{document}